\documentclass[a4paper,11pt]{article}
\usepackage[T1]{fontenc}
\usepackage{lmodern}
\usepackage[margin=29mm]{geometry}
\usepackage{amsmath,amssymb,amsthm,microtype}
\usepackage[hidelinks]{hyperref}
\usepackage{color}

\hypersetup{pdftitle={Flocking with finite-speed communication: a Doppler-inspired Lyapunov approach},pdfsubject={Flocking with finite-speed communication: a Doppler-inspired Lyapunov approach}}
\numberwithin{equation}{section}
\newtheorem{theorem}{Theorem}
\newtheorem{definition}{Definition}
\newtheorem{proposition}[theorem]{Proposition}
\newtheorem{lemma}[theorem]{Lemma}
\newtheorem{corollary}[theorem]{Corollary}
\theoremstyle{remark}
\newtheorem{remark}[theorem]{Remark}

\newcommand{\R}{\mathbb R}
\newcommand{\N}{\mathbb N}

\newcommand{\cvel}{\mathfrak c}
\newcommand{\svel}{\mathfrak s}

\newcommand{\HH}{\mathcal H}
\newcommand{\KK}{\mathcal K}
\newcommand{\LL}{\mathcal L}
\newcommand{\JJ}{J}
\newcommand{\pp}{\psi\left(  \cvel\overline\tau  \right)}
\renewcommand{\d}{\mathrm d}
\DeclareMathOperator{\diam}{diam}
\DeclareMathOperator{\co}{co}
\DeclareMathOperator{\sgn}{sgn}
\allowdisplaybreaks[1]

\def\({\begin{eqnarray}}
\def\){\end{eqnarray}}
\def\[{\begin{eqnarray*}}
\def\]{\end{eqnarray*}}

\def\part#1#2{\frac{\partial #1}{\partial #2}}

\def\d{\mathrm{d}}

\def\ta#1#2{\tau_{{#1}{#2}}}
\def\taij{\ta{i}{j}}

\def\solx{\mathbf{x}}
\def\solv{\mathbf{v}}

\def\dx{{d_{\solx}}}
\def\dv{{d_{\solv}}}

\title{Flocking with finite communication speed:\\ a Doppler-inspired Lyapunov approach}

\author{J. Haskovec\footnote{Mathematical and Computer Sciences and Engineering Division, King Abdullah University of Science and Technology, Thuwal 23955-6900, Kingdom of Saudi Arabia. Email: {\it jan.haskovec@kaust.edu.sa}}, M. Rodriguez Cartabia\footnote{IMAS (UBA-CONICET) and Departamento de Matem\'atica, Facultad de Ciencias Exactas y Naturales, Universidad de Buenos Aires, Ciudad Universitaria, 1428 Buenos Aires, Argentina. Email: {\it mrodriguezcartabia@dm.uba.ar}}}

\date{}

\begin{document}
\maketitle

\begin{abstract}
We study the Cucker--Smale model with state-dependent transmission delays
arising from finite-speed information propagation.
Our main contribution is a Lyapunov functional inspired by the Doppler effect,
combining a delay potential with a logarithmic correction involving the
receiver velocity. Its dissipation yields uniform delay bounds without
relying on velocity contraction estimates.
For two agents in one spatial dimension, we prove asymptotic flocking for
arbitrary compatible subluminal initial histories, assuming only
that the influence function has a divergent integral at infinity.
A bounded geometric correction extends this result to arbitrary spatial
dimension when all values of the initial velocity histories lie on one common
affine line.
For multiple agents in one dimension, we obtain uniform delay bounds, and thus flocking,
provided that a fixed agent is leftmost for all sufficiently large times.
In all these flocking regimes, the spatial diameter remains uniformly bounded
and the velocity diameter decays exponentially.
We also derive a conditional criterion involving logarithmic sums when the
leftmost agent changes infinitely often.
Finally, we construct a multidimensional analogue of the Doppler-inspired functional and
identify an additional assumption needed to obtain delay bounds in the general
case.
\end{abstract}
\vspace{2mm}

\textbf{Keywords:} Cucker--Smale model; state-dependent delay; asymptotic flocking; Doppler-inspired Lyapunov functional.
\vspace{2mm}

\textbf{MSC 2020 codes:} 34K25, 34K43, 34K20.


\section{The model}\label{sec:model}

The Cucker--Smale \cite{CS1, CS2} model with finite speed of information propagation, introduced and studied in \cite{H:KRM},
describes a group of $N\geq 2$ autonomous agents located in the physical space $\R^d$, $d\geq 1$.
Each agent $i\in[N]:=\{1,\ldots,N\}$ is characterized by its phase-space coordinates $(x_i(t), v_i(t))\in\R^{2d}$,
where $x_i(t)$ denotes the position and $v_i(t)$ the velocity of the $i$-th agent.
In the sequel we shall denote $\solx := (x_1, \ldots, x_N)\in \R^{Nd}$ 
and $\solv := (v_1, \ldots, v_N)\in \R^{Nd}$. 
The system is subject to the following collective dynamics,
\begin{align}
   \dot x_i(t) &= v_i(t),   \label{eq:CS1} \\
    \dot v_i(t) &=  \sum_{\substack{j\in[N]\\j\neq i}} \psi( | x_j(r_{ij}(t)) - x_i(t)| )  ( v_j(r_{ij}(t)) - v_i(t)),   \label{eq:CS2}
\end{align}
with the retarded times $r_{ij}(t) := t-\tau_{ij}(t)$, and the state-dependent
transmission delays $\tau_{ij}=\tau_{ij}(t)$ satisfy the light-cone relation
\(   \label{def:tauij}
    \cvel\tau_{ij}(t) = \left| x_i(t)-x_j(t-\tau_{ij}(t)) \right|.
\)
Here $\cvel>0$ is the prescribed speed of information propagation.
Since we do not consider the mean-field limit of the system here, we omit the standard normalization factor of
$\frac{1}{N-1}$ on the right-hand side of \eqref{eq:CS2}, which was present in \cite{H:KRM}.
This significantly simplifies the notation in the paper.

The positive influence function $\psi:[0,\infty)\to(0,\infty)$ is assumed to be bounded, uniformly Lipschitz continuous and nonincreasing.
Without loss of generality we may assume that $\psi(0)=1$.
Indeed, denoting $M:=\psi(0)>0$, the space-time rescaled variables
\begin{equation*}
\widehat x_i(s):=M x_i(s/M),\qquad
\widehat v_i(s):=v_i(s/M),\qquad
\widehat\tau_{ij}(s):=M\tau_{ij}(s/M),\qquad
\widehat\psi(r):=\frac{1}{M}\psi(r/M),
\end{equation*}
satisfy \eqref{eq:CS1}--\eqref{def:tauij} and leave the propagation speed $\mathfrak c$ unchanged.
We therefore impose $\psi(0)=1$ in the sequel.

System \eqref{eq:CS1}--\eqref{eq:CS2} is equipped with the initial datum
\(  \label{IC}
    x_i(t) = x_i^0(t), \quad v_i(t) = v_i^0(t) \qquad\mbox{for } i\in[N],\quad t\leq 0,
\)
where $x_i^0=x_i^0(t)$, $v_i^0=v_i^0(t)$ are continuous paths on $(-\infty,0]$.
We impose the physically relevant assumption that the initial velocities are subluminal, i.e.,
\( \label{cond:sublum}
    \svel := \max_{i\in[N]} \sup_{t\leq 0} |v_i^0(t)| <\cvel.
\)
Moreover, we impose the compatibility condition
\(   \label{IC:comp}
   x^0_i(t) = x^0_i(0) + \int_0^t v^0_i(s) \d s \qquad\mbox{for } i\in[N],\quad t\leq 0.
\)
Then, as proved in \cite[Theorem 1]{H:KRM}, the values of the initial datum \eqref{IC}
are only relevant on the bounded time interval $[-S_0,0]$ with $S_0:= \frac{\dx(0)}{\cvel-\svel}$
and $\dx(0) := \max_{i,j \in[N]}|x_i^0(0) - x_j^0(0)|$.
Therefore, we only need to prescribe $x_i^0(0)$ and $v_i^0=v_i^0(t)$ for $t\in [-S_0,0]$.

The following result on global existence of unique solutions of the system \eqref{eq:CS1}--\eqref{IC:comp}
was established in \cite[Theorem 1]{H:KRM}. We also refer to \cite{HKWW} for the general theory
of functional differential equations with state-dependent delays.

\begin{proposition}[Existence and uniqueness of global solutions]  \label{thm:ex}
Under the assumptions stated above, system \eqref{eq:CS1}--\eqref{IC} admits a unique global solution 
$\solx\in C([-S_0,\infty);\R^{Nd})$, $\solv \in C([-S_0,\infty);\R^{Nd})$ such that
\(  \label{eq:subluminal}
    |v_i(t)| \leq \svel \quad \mbox{ for all } t \geq 0,\, i\in [N],
\)
with $0 \leq \svel < \cvel$ given by \eqref{cond:sublum}.
\end{proposition}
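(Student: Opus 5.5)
The proof is a continuation argument, with the delays controlled through the implicit function theorem. The key observation is that as long as all velocities stay subluminal ($|v_j|\le\svel<\cvel$), the light-cone relation \eqref{def:tauij} has a unique, well-behaved solution $\tau_{ij}(t)$ for each $t$.

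\textbf{Step 1: well-posedness of the delays.} Suppose a solution is known on $[-S_0,T]$ with $|v_j|\le\svel$. Fix $t\in[0,T]$ and set $F(\tau):=\cvel\tau-|x_i(t)-x_j(t-\tau)|$.
\begin{itemize}
\item $F(0)\le 0$.
\item $F$ is strictly increasing with slope at least $\cvel-\svel>0$, because $s\mapsto x_j(s)$ is $\svel$-Lipschitz, including on the initial interval by \eqref{IC:comp}.
\end{itemize}
Hence $F$ has a unique root $\tau_{ij}(t)\ge0$.

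To bound the root, use $|x_i(t)-x_j(t-\tau)|\le |x_i(t)-x_j(t)|+\svel\tau$. This gives $\tau_{ij}(t)\le |x_i(t)-x_j(t)|/(\cvel-\svel)$. At $t=0$ this is at most $S_0$, so $r_{ij}(0)\ge -S_0$, which is why only data on $[-S_0,0]$ matters. For $t>0$, $r_{ij}$ is nondecreasing: implicit differentiation gives $\dot r_{ij}=1-\dot\tau_{ij}>0$, because the Lipschitz constants are below $\cvel$. Consequently $r_{ij}(t)\ge r_{ij}(0)\ge -S_0$ for all $t\ge0$.

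Lipschitz continuity of $\tau_{ij}$ in $t$, and Lipschitz dependence on the path $x_j$ in sup norm, both follow from the lower bound $\cvel-\svel$ on the slope of $F$.

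\textbf{Step 2: local existence and uniqueness.} Set this up as a fixed point problem on $[0,h]$. Work with velocity paths $\solv$ that extend the history, are continuous, and satisfy $|v_j|\le\cvel'$ for some $\svel<\cvel'<\cvel$. Define positions by integration, delays by Step 1 with $\cvel'$ in place of $\svel$, and the map $\Phi(\solv)(t)=v_i(0)+\int_0^t(\text{RHS})$.

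The Lipschitz property of $\psi$, its boundedness, and the Lipschitz dependence of the delays give a contraction for small $h$, except for one term. The composition $v_j(r_{ij}(t))$ is only continuous in $v_j$. It is Lipschitz in $r_{ij}$ only if $v_j$ is Lipschitz, and $v_j$ is not Lipschitz on the initial interval (it is merely continuous). This is the main obstacle.

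I would try first to handle it with the standard method-of-steps observation. For $t\in[0,h]$ with $h<\min_{i\ne j}\tau_{ij}$, the retarded arguments lie in the past, so the delays are determined by known data, and the right-hand side only involves $v_i(t)$ itself. The equation is then a linear ODE in $v_i$. This requires $\tau_{ij}>0$, i.e.\ agents not colliding. If agents do collide ($\tau_{ij}$ small), I would instead use the Lipschitz contraction on the already constructed part of the solution, where $v_j$ is $C^1$, and rely on continuity only near $t=0$, as in \cite{HKWW}.

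\textbf{Step 3: the a priori bound \eqref{eq:subluminal} and global continuation.} For each $i$, $\dot v_i=\sum_j\psi(\cdot)(v_j(r_{ij})-v_i)$ is a relaxation toward a convex combination of values of magnitude at most $\svel$. Let $t^*$ be the first time at which $\max_i|v_i|$ exceeds $\svel$. For the maximizing $i$, $\frac{\d}{\d t}|v_i|^2\le 2\sum_j\psi(\cdot)(|v_j(r_{ij})||v_i|-|v_i|^2)\le0$, since $|v_j(r_{ij})|\le\svel$ by the induction hypothesis and $r_{ij}<t^*$. Making this rigorous means running the argument with $\svel+\varepsilon$ and letting $\varepsilon\to0$. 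Hence $|v_i|\le\svel$ persists.

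With the velocities uniformly bounded, the Lipschitz constants in Step 2 are uniform, so the local existence time does not shrink. Continuation then yields a unique global solution.
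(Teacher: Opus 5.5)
The paper does not prove this proposition; it imports it from \cite[Theorem~1]{H:KRM}. Judged on its own, your outline has a genuine gap in Step~2. The gap sits exactly at the obstacle you identify, and your proposed way around it does not work.

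When $r_{ij}(t)<0$, the delay is \emph{not} determined by known data. $\tau_{ij}(t)$ is the root of $\cvel\tau=|x_i(t)-x_j^0(t-\tau)|$, so it depends on the receiver's unknown current position $x_i(t)=x_i(0)+\int_0^t v_i$. By your Step~1, $\tau_{ij}(t)=T_{ij}(t,x_i(t))$ with $T_{ij}$ Lipschitz. On such an interval agent $i$ therefore obeys the nonlinear system $\dot x_i=v_i$, $\dot v_i=\sum_{j\neq i}\psi(\cvel T_{ij})\bigl(v_j^0(t-T_{ij}(t,x_i))-v_i\bigr)$, not a linear ODE in $v_i$. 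Its right-hand side is only continuous in $x_i$, because $v_j^0$ is only continuous. This is the same non-Lipschitz composition you wanted to avoid. Peano gives existence, but nothing gives uniqueness. For state-dependent delays, continuity of the history does not in general imply uniqueness, which is why the classical theory (including \cite{HKWW}) works with Lipschitz or $C^1$ histories.

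Your fallback does not help. The history is used on the whole interval where $r_{ij}(t)\leq 0$, which has length at least $\kappa\tau_{ij}(0)$ and possibly up to $\tau_{ij}(0)/\kappa$. On that interval there is no already constructed $C^1$ part to compose with. Your case distinction is also inverted. If $\tau_{ij}(0)=0$, the argument of Corollary~\ref{cor:retarded-increments} gives $r_{ij}(s)\geq\kappa s>0$ for $s>0$. That pair then only sees $v_j$ on $(0,h]$, which is Lipschitz because the right-hand side of \eqref{eq:CS2} is bounded, so a contraction in a class of uniformly Lipschitz paths works. The hard case is the non-colliding one.

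The missing ingredient must use the light-cone structure, namely the uniform monotonicity $\dot r_{ij}\geq\kappa>0$ of the retarded time. One way is to substitute the emission time $\sigma=r_{ij}(s)$ in $\int_0^t\psi_{ij}(s)v_j(r_{ij}(s))\,\d s$, with Jacobian $1/\dot r_{ij}=p_{ij}/q_{ij}$ as in \eqref{eq:optical-delay-kinematics}. The merely continuous history then appears only as a function of the integration variable. Two solutions with the same history differ only through Lipschitz quantities:
\begin{itemize}
\item the endpoints $r_{ij}(t)$ and the inverse maps $r_{ij}^{-1}$;
\item the line of sight, which is Lipschitz while $\tau_{ij}$ stays away from zero, and this holds in the history regime when $\tau_{ij}(0)>0$;
\item $v_i$ on $[0,t]$.
\end{itemize}
Gronwall then closes the argument. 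The alternative is to assume Lipschitz initial velocities.

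Without such an argument, neither Step~2 nor the continuation in Step~3 is justified, since Step~3 reuses Step~2 as long as retarded times fall into $[-S_0,0]$. Two smaller points:
\begin{itemize}
\item The implicit differentiation in Step~1 is not available where $\tau_{ij}=0$; replace it by the difference-quotient argument of Lemma~\ref{lem:causality}.
\item In Step~3, a constant barrier $\svel+\varepsilon$ only gives $\frac{\d}{\d t}|v_i|^2\leq 0$ at the touching time, which is no contradiction. Use a strictly increasing barrier, or the convex-combination form of \eqref{eq:voc}.
\end{itemize}
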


Let us point out that the subluminal condition \eqref{cond:sublum} imposed on the initial
velocity trajectories is propagated along the solution \eqref{eq:subluminal}.
This ensures the unique solvability of the light-cone relation \eqref{def:tauij} for all $t\geq 0$.
Moreover, it implies global $\svel$-Lipschitz continuity of the trajectories $x_i(t)$, $i\in[N]$.

This paper is concerned with a proof of asymptotic flocking, which is defined as follows.

\begin{definition}[Asymptotic flocking]\label{def:flocking}
We say that a solution 
$(\solx(t), \solv(t))_{t\geq 0}$ of the system \eqref{eq:CS1}--\eqref{IC} exhibits \emph{(asymptotic) flocking} if
\[  
   \sup_{t\geq 0} \dx(t) < \infty,\qquad \lim_{t\to\infty} \dv(t) = 0,
\]
where the spatial and, resp., velocity diameters are defined as
\[  \label{dXdV}
   \dx(t) := \max_{i,j \in[N]}|x_i(t) - x_j(t)| \quad \mbox{and} \quad \dv(t) := \max_{i,j \in[N]}|v_i(t) - v_j(t)|.
\]
\end{definition}

We refer to \cite{ChoiSeo2023, Continelli2023, H:SIADS, H:PAMS, HRC:PAMS, PT:CMS} for flocking results in related Cucker-Smale systems with delay.

\section{Main result and its discussion}

The main result of this paper is the following.

\begin{theorem}\label{thm:main}
Assume that the initial histories satisfy the subluminality and compatibility conditions \eqref{cond:sublum}
and \eqref{IC:comp}, and that
\begin{equation}\label{cond:flocking}
    \int_0^\infty \psi(s)\,\d s=\infty.
\end{equation}
Then the unique global solution of
\eqref{eq:CS1}--\eqref{IC} exhibits asymptotic flocking in the sense of
Definition~\ref{def:flocking} in each of the following cases:
\begin{enumerate}
    \item[(i)] $N=2$ and $d=1$.
    \item[(ii)] $N=2$ and $d\geq2$, and the initial velocity histories $v_1^0$, $v_2^0$ lie on one common affine line. 
    \item[(iii)] $N>2$ and $d=1$, and a fixed agent is leftmost for all sufficiently large times.
    \end{enumerate}
In all these cases the transmission delays $\tau_{ij}=\tau_{ij}(t)$ and the spatial diameter $\dx=\dx(t)$
are uniformly bounded,
and the velocity diameter $\dv=\dv(t)$ converges to zero exponentially as $t\to\infty$.
\end{theorem}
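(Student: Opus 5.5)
The plan is to establish, in each of the three cases, a uniform bound $\sup_{t\ge0}\tau_{ij}(t)\le\overline\tau<\infty$. Flocking then follows by a standard argument. If the delays are bounded, then $|x_j(r_{ij}(t))-x_i(t)|=\cvel\tau_{ij}(t)\le\cvel\overline\tau$, so every communication weight satisfies $\psi(\cdot)\ge\pp>0$ by monotonicity of $\psi$. With a uniform positive lower bound on the weights and a bounded delay, I would apply the usual convexity argument for delayed consensus: the velocities $v_i(t)$ stay in the convex hull of the velocity values on $[t-\overline\tau,t]$. A contraction estimate on windows of length $\overline\tau$ then gives $\dv(t)\le Ce^{-\lambda t}$ for some $\lambda>0$ depending on $\pp$, $\overline\tau$ and $N$. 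Integrating yields $\sup_t\dx(t)<\infty$. Note that condition \eqref{cond:flocking} is \emph{not} used in this step. It enters only when we prove that the delay cannot grow without bound.

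For case (i), $N=2$, $d=1$, I would build the Doppler-inspired functional. Differentiating \eqref{def:tauij} in one dimension gives, where $x_1(t)>x_2(r_{12}(t))$,
\[
\cvel\dot\tau_{12}=v_1(t)-v_2(r_{12})(1-\dot\tau_{12}),\qquad\text{so}\qquad 1-\dot\tau_{12}=\frac{\cvel-v_1(t)}{\cvel-v_2(r_{12})}.
\]
This is exactly the Doppler factor, and it makes $\log(\cvel-v_1)$ the natural companion quantity. Plugging in the equation $\dot v_1=\psi(\cvel\tau_{12})(v_2(r_{12})-v_1)$ and writing $v_2(r_{12})-v_1=(\cvel-v_1)-(\cvel-v_2(r_{12}))$, I expect
\[
\frac{\d}{\d t}\log(\cvel-v_1(t))=-\psi(\cvel\tau_{12})\Big(1-\frac{\cvel-v_2(r_{12})}{\cvel-v_1}\Big).
\]
Set $\Psi(s):=\int_0^s\psi(\cvel\sigma)\,\d\sigma$. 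Then a combination of the form $\mathcal L(t)=\Psi(\tau_{12}(t))-\log(\cvel-v_1(t))$, possibly with a matching term for the pair $(2,1)$, should satisfy $\dot{\mathcal L}\le0$, with a residual term whose sign comes from $1-\dot\tau=(\cvel-v_1)/(\cvel-v_2(r))$. Monotonicity then gives $\Psi(\tau_{12}(t))\le\mathcal L(0)+\log(2\cvel)$. Since $\log(\cvel-v_1)$ is bounded by subluminality \eqref{eq:subluminal}, and $\Psi(\tau)\to\infty$ as $\tau\to\infty$ by \eqref{cond:flocking}, this yields $\overline\tau<\infty$. The ordering of the agents can change over time, so I would treat the two orientations separately and check that the functional stays monotone across crossings. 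A crossing means $\tau_{12}$ is small there, so only a bounded jump should occur.

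For case (ii), if all initial velocity values lie on an affine line $\ell=\{a+sb\}$, then by uniqueness the velocities stay on $\ell$, because the right-hand side of \eqref{eq:CS2} is a convex-type combination that preserves $\ell$. Projecting along $b$ reduces the motion to one dimension plus a constant transverse drift $a^\perp$. The light-cone relation then acquires a transverse component, and the plan is to show that this only adds a bounded geometric correction to the functional of (i); $\Psi(\tau)$ stays bounded and we conclude as before. For case (iii), after the time $T$ beyond which agent $1$ (say) is leftmost, I would sum the functionals $\Psi(\tau_{i1})-\log(\cvel-v_\cdot)$ over the relevant ordered pairs. The fixed orientation keeps the sign structure constant, and the mixed interaction terms should telescope or be nonpositive. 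The delays on $[0,T]$ are bounded by continuity, which handles the initial stretch.

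The main obstacle is verifying that $\dot{\mathcal L}\le 0$. Concretely, one must pick the right pairing of the delay potential with the receiver-velocity logarithm so that the term $\psi(\cvel\tau)(\cvel-v_2(r))/(\cvel-v_1)$ is absorbed by $\dot\tau\,\psi(\cvel\tau)$. My first attempt would be an exact cancellation using the identity $1-\dot\tau=(\cvel-v_1)/(\cvel-v_2(r))$. If exact cancellation fails, I would try the inequality $x-1\ge\log x$.
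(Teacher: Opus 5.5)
\textbf{Verdict: the proposal has a genuine gap.} Case (iii) as proposed fails, and case (ii) asserts the crucial correction without constructing it; the framework and case (i) are sound.

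\textbf{Framework and case (i).} Your reduction to a uniform delay bound is right. Your case (i) functional $\Psi_{\cvel}(\tau_{12})-\ln(\cvel-v_1)$ is exactly the paper's $J_{12}$ up to an additive constant, and no inequality is needed. With $p=\cvel-\varepsilon_{12}v_2(r_{12})$ and $q=\cvel-\varepsilon_{12}v_1$ one has $p\dot\tau_{12}=p-q$ and $\dot q=\psi_{12}(p-q)$, hence $\frac{\d}{\d t}J_{12}=-\psi_{12}(p-q)^2/(pq)$. Orientation changes should be handled by restarting the integration at the last zero of $\tau_{12}$, where $\Psi_{\cvel}(\tau_{12})=0$. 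This gives $\Psi_{\cvel}(\tau_{12}(t))\le\Psi_{\cvel}(\tau_{12}(0))+\ln\frac{\cvel+\svel}{\cvel-\svel}$. Summing ``bounded jumps'' over possibly infinitely many crossings would not give a uniform bound.

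\textbf{Case (iii): the proposed sum does not dissipate.} You sum $\sum_{i\neq1}[\Psi_{\cvel}(\tau_{i1})-\ln(\cvel-v_i)]$, with the leftmost agent $1$ as \emph{emitter}. Since $\dot v_i$ contains the influence of all emitters, its derivative is
\begin{equation*}
\sum_{i\neq1}\Bigl(-\psi_{i1}\frac{(v_1(r_{i1})-v_i)^2}{(\cvel-v_1(r_{i1}))(\cvel-v_i)}+\sum_{j\neq i,1}\psi_{ij}\frac{v_j(r_{ij})-v_i}{\cvel-v_i}\Bigr).
\end{equation*}
The cross terms have no definite sign and do not telescope, since $\psi_{ij}\neq\psi_{ji}$ and the retarded velocities and denominators differ. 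A receiver with emitters on both sides cannot be handled by a single logarithm. The missing idea is to make the leftmost agent $\ell$ the \emph{receiver}. Because $\svel<\cvel$, every signal it receives comes from the right, and $\dot v_\ell$ is exactly the sum of these interactions. Hence $F_\ell=\sum_{j\neq\ell}\Psi_{\cvel}(\tau_{\ell j})-\ln(\cvel+v_\ell)$ dissipates exactly, which bounds the $\tau_{\ell j}$. Then $\dx\le(\cvel+\svel)\max_j\tau_{\ell j}$, and $\tau_{ij}\le\dx/(\cvel-\svel)$ for all remaining pairs.

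\textbf{Case (ii): the correction must be built, and the naive one fails.} After projection the longitudinal separation is $|y_j(r_{ij})-y_i|=R_{ij}(\tau_{ij})$, with $R_{ij}(\sigma)=\sqrt{C^2\sigma^2-2(z_{ij}\cdot w)\sigma-|z_{ij}|^2}$. So the effective signal speed $k_{ij}(\tau_{ij})=R_{ij}'(\tau_{ij})\ge C=\sqrt{\cvel^2-|w|^2}$ depends on the delay. The naive functional $\Psi_{\cvel}(\tau_{ij})-\ln(C-\varepsilon_{ij}u_i)$ then produces, besides the dissipation, the term $-\psi_{ij}\dot\tau_{ij}\,(k_{ij}(\tau_{ij})-C)/(C-\varepsilon_{ij}u_i)$. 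This term is positive whenever $\tau_{ij}$ decreases. Its integral is bounded on each monotone stretch of $\tau_{ij}$, but not over arbitrarily many oscillations. No additive function of $\tau_{ij}$ alone cancels it, because of the $u_i$ in the denominator.

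The paper puts the correction inside the logarithm, replacing $C$ by $Q_{ij}(\tau_{ij})$, where
\begin{equation*}
Q_{ij}(\sigma)=\int_\sigma^\infty\psi(\cvel s)k_{ij}(s)e^{\Psi_{\cvel}(\sigma)-\Psi_{\cvel}(s)}\,\d s
\end{equation*}
solves $Q_{ij}'=\psi(\cvel\sigma)(Q_{ij}-k_{ij})$. The fat-tail condition makes these weights have unit mass, and $\int_{\rho_{ij}}^\infty(k_{ij}-C)\,\d s=K_{ij}/C$ gives $C\le Q_{ij}\le C+K_{ij}/C$. The identity then closes exactly as in one dimension. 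The restarts occur at times where $\tau_{ij}=\rho_{ij}\le\tau_{ij}(0)$ rather than where $\tau_{ij}=0$.
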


The fat-tail condition \eqref{cond:flocking} is known to be optimal
for unconditional flocking for the original Cucker-Smale system without delay \cite{CS1, CS2, Ha-Liu}.
Therefore, Theorem \ref{thm:main}(i) establishes flocking for $N=2$ and $d=1$ under minimal assumptions.
Part (ii) then generalizes the result for $d\geq 2$, however, the assumption that the initial velocity histories
lie on one common affine line allows to effectively project the system's dynamics into one dimension.
A truly spatially multidimensional setting introduces technical difficulties that we were not able to resolve.

Part (iii) of Theorem \ref{thm:main} extends the one-dimensional result to $N>2$,
under the additional assumption that a fixed agent is leftmost for all sufficiently large times;
obviously, the same result also holds if a fixed agent is eventually rightmost.
We conjecture that this assumption holds for all solutions of \eqref{eq:CS1}--\eqref{IC}
satisfying \eqref{cond:sublum} and \eqref{IC:comp}; however, we were not able to provide a proof.

The main novelty of our analysis is the introduction of a Lyapunov
functional inspired by the Doppler effect.
For $N=2$ it consists of a delay potential defined in terms of the primitive of the influence function $\psi$,
and a logarithm of the so-called Doppler receiver factor.
This factor is the Doppler frequency ratio associated with the retarded signal
received by the respective agent.
A dissipation identity, combined with the fat tail assumption \eqref{cond:flocking},
then directly implies a uniform bound on the delays,
without using any velocity contraction estimates.
For $N>2$ we introduce a generalization of the Doppler-inspired
functional, and under the additional assumption that a fixed agent
is leftmost for all sufficiently large times,
its decay again implies a uniform bound on the delays.

Once the uniform bound on the delays is established,
a uniform bound on the spatial diameter $\dx=\dx(t)$
immediately follows. Moreover, exponential decay of
the velocity diameter $\dv=\dv(t)$ is then proved directly.
No bootstrap or smallness assumptions on the initial diameters are needed,
and the propagation speed need only satisfy $\cvel>\svel$.
In contrast, the approach of \cite{H:KRM} applies to arbitrary $N$ and $d$,
but uses a bootstrap argument
linking velocity decay, spatial dispersion, and transmission delays.
Its restrictive closure conditions couple the influence function,
initial fluctuations, and propagation speed; for fixed $\cvel$,
they require small position and velocity fluctuations.

\subsection*{Organization of the paper}
The paper is organized around the separation between delay control
and velocity alignment. In Section~\ref{sec:hull} we collect results
that hold for any $N\geq 2$ and $d\geq 1$.
We first establish causality, which amounts to strict monotonicity
of all retarded-time maps $t \mapsto r_{ij}(t)$.
We then prove nonexpansivity of the convex hull of the relevant
velocity histories. Both properties hold without an a priori bound
on the delays. Finally, under the assumption of uniformly bounded delays,
we obtain uniform contraction of the velocity-history convex hull over fixed
time intervals, and hence exponential decay of the velocity diameter.
Since bounded delays also bound the spatial diameter, these results
reduce the flocking problem to establishing a uniform delay bound.

Section~\ref{sec:1D} carries out this step for two agents in one
dimension using the Doppler-inspired functional described above.
Its decay follows from the fact that for $N=2$ the
receiver's acceleration is generated by a single emitter,
and directly implies a uniform bound on the delays.
This proves part (i) of Theorem \ref{thm:main}.

Section~\ref{sec:multid-flocking} extends the construction to two
agents in multiple spatial dimensions with initial velocity histories
lying on one common affine line. 
Nonexpansivity confines both velocities to the convex hull of all relevant initial velocity values,
so that the dynamics can be projected onto a fixed
velocity direction. A bounded geometric correction to the logarithmic
term accounts for the transverse separation and yields
a modified Lyapunov functional with the same delay-control mechanism.
Thus part (ii) of Theorem \ref{thm:main} is established.

Section~\ref{sec:N1d} examines the extension to multiple agents in one dimension.
The main idea is that when the receiver is leftmost, all incoming signals
have the same orientation. Summing their delay potentials then
leads to a generalization of the Doppler-inspired functional.
If an agent remains leftmost after some finite time, its decay gives a uniform bound on all delays.
This proves part (iii) of Theorem \ref{thm:main}.

In the rest of Section~\ref{sec:N1d} we also discuss the case when
the leftmost agent changes infinitely many times (although we hypothesise that this case never occurs).
Here we employ the decay of the Doppler-inspired functional on each interval
separating the switches of the leftmost agent.
Concatenating the interval estimates produces a sum of logarithmic terms stemming from the switches.
Uniform boundedness of the delays then follows provided that the partial logarithmic sums are uniformly bounded above.

Finally, Section~\ref{sec:optical-action} presents a multidimensional
analogue of the Doppler-inspired functional, which satisfies a
dissipation identity for arbitrary $N\geq 2$ and $d\geq 1$.
Unfortunately, a uniform lower bound on the functional
is the missing ingredient for delay control,
leaving the general multidimensional flocking problem a subject for future research.

\section{Results valid for general $N\geq 2$ and $d\geq 1$}\label{sec:hull}

In this section we prove three results that hold for all solutions of the system \eqref{eq:CS1}--\eqref{IC}
with any $N\geq 2$ and $d\geq 1$, assuming only that the initial datum satisfies \eqref{cond:sublum} and \eqref{IC:comp}.
In particular, we do not assume validity of the fat-tail assumption \eqref{cond:flocking} here.

We recall that for $i,j\in[N]$ and $t\geq 0$ we denote $r_{ij}(t):=t-\taij(t)$ the retarded time.
We say that the system satisfies the \emph{causality condition} if every map $t \mapsto r_{ij}(t)$ is strictly increasing.
The next lemma establishes causality, even in a stronger version, for the solutions of \eqref{eq:CS1}--\eqref{eq:CS2}.

\begin{lemma}\label{lem:causality}
For any $1 \leq \lambda < \frac{\cvel+\svel}{2\svel}$, the functions $t \mapsto t- \lambda\tau_{ij}(t)$ are strictly increasing for all $t\geq 0$ and $i,j\in[N]$,
along the solutions of \eqref{eq:CS1}--\eqref{cond:sublum}.
In particular, the causality condition is satisfied globally.
\end{lemma}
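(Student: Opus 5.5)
Throughout I fix $i\neq j$ (for $i=j$ the delay vanishes and there is nothing to prove). The plan is to differentiate the light-cone relation \eqref{def:tauij} implicitly and bound $\dot\tau_{ij}$ from above by a constant strictly smaller than $1/\lambda$.

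\textbf{Step 1: regularity of the delay.} By Proposition~\ref{thm:ex} the trajectories are $\svel$-Lipschitz, including on the history interval thanks to \eqref{cond:sublum} and \eqref{IC:comp}. So for fixed $t$ the map $\tau\mapsto \cvel\tau-|x_i(t)-x_j(t-\tau)|$ is strictly increasing with slope at least $\cvel-\svel>0$. This is what makes $\tau_{ij}(t)$ uniquely defined. The same monotonicity gives Lipschitz continuity of $t\mapsto\tau_{ij}(t)$: comparing the relations at $t$ and $t+h$ yields $\cvel|\tau(t+h)-\tau(t)|\le \svel|h|+\svel|h|+\svel|\tau(t+h)-\tau(t)|$. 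It is therefore enough to bound $\dot\tau_{ij}$ at points of differentiability, or equivalently to work with difference quotients directly.

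\textbf{Step 2: the derivative bound.} When $\tau_{ij}(t)>0$, set $u=(x_i(t)-x_j(r_{ij}(t)))/|x_i(t)-x_j(r_{ij}(t))|$. Differentiating \eqref{def:tauij} gives
\[
\cvel\dot\tau_{ij} = u\cdot\bigl(v_i(t)-v_j(r_{ij})(1-\dot\tau_{ij})\bigr),
\]
and hence
\[
\dot\tau_{ij}=\frac{u\cdot(v_i(t)-v_j(r_{ij}))}{\cvel-u\cdot v_j(r_{ij})}\le \frac{2\svel}{\cvel-\svel}.
\]
For a difference-quotient version that needs no differentiability, the Lipschitz estimate of Step 1 with one-sided increments gives $\cvel(\tau(t+h)-\tau(t))\le 2\svel h-\svel(\tau(t+h)-\tau(t))$ when the increment is positive. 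This leads to the same type of bound, $(\tau(t+h)-\tau(t))/h\le 2\svel/(\cvel+\svel)$, which is in fact sharper.

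\textbf{Step 3: conclusion.} From Step 2 we get $\frac{d}{dt}(t-\lambda\tau_{ij})\ge 1-\lambda\frac{2\svel}{\cvel+\svel}>0$, precisely because $\lambda<\frac{\cvel+\svel}{2\svel}$. The difference-quotient form then gives strict increase directly, with no further integration argument. Taking $\lambda=1$ is admissible since $\svel<\cvel$, and this is causality.

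\textbf{Main obstacle.} The difficulty is rigor at points where $\tau_{ij}=0$ (collisions, where the norm is not differentiable) and at points where the trajectory is only Lipschitz. Steps 1 and 3 sidestep this by working with one-sided difference quotients instead of derivatives. The estimate I will check carefully is the sharp one. With $\delta=\tau(t+h)-\tau(t)$, the triangle inequality gives
\[
\cvel\tau(t+h)\le |x_i(t)-x_j(t-\tau(t))|+\svel h+\svel|h-\delta| .
\]
If $\delta\le h$, this becomes $\cvel\delta\le \svel h+\svel(h-\delta)$, i.e. $\delta\le 2\svel h/(\cvel+\svel)$. If $\delta>h$, it gives $\cvel\delta\le\svel\delta$, which is impossible. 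So the sharp bound holds in both cases, and this yields exactly the threshold $\frac{\cvel+\svel}{2\svel}$ in the statement.
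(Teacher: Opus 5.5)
Your proof is correct and uses the same estimate as the paper: the two-time triangle inequality $\cvel\delta\le\svel h+\svel|h-\delta|$, obtained from the light-cone relation and the $\svel$-Lipschitz continuity of the trajectories. The paper argues by contradiction with $\lambda$ built in, whereas your direct case split gives the sharp bound $\delta\le\frac{2\svel}{\cvel+\svel}h$ for every $h>0$. That bound also yields the lower bound of Corollary~\ref{cor:retarded-increments} without letting $\lambda\to\frac{\cvel+\svel}{2\svel}$, and since it holds for arbitrary $h$, Steps 1--2 are unnecessary (the crude $2\svel/(\cvel-\svel)$ bound there would not reach the stated threshold anyway).
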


\begin{proof}
Fix $\lambda \in \left[ 1, \frac{\cvel+\svel}{2\svel} \right)$.
For contradiction, suppose there are $0 \leq t_1<t_2$ such that
$$t_1- \lambda\tau_{ij}(t_1) \geq t_2- \lambda\tau_{ij}(t_2).$$
Note that this implies 
$\lambda(\tau_{ij}(t_2) - \tau_{ij}(t_1)) \geq t_2-t_1 >0$,
and thus $\tau_{ij}(t_2) - \tau_{ij}(t_1) >0$. 
From \eqref{def:tauij} and \eqref{eq:subluminal} we then have
\begin{align*}
 \cvel \left( \tau_{ij}(t_2)-\tau_{ij}(t_1) \right) & =   \left|x_i(t_2)-x_j(t_2-\tau_{ij}(t_2))\right| -  \left|x_i(t_1)-x_j(t_1-\tau_{ij}(t_1))\right|   \\
  &\leq  \left|x_i(t_2)-x_j(t_2-\tau_{ij}(t_2))-x_i(t_1)+x_j(t_1-\tau_{ij}(t_1))\right|   \\
  &\leq  |x_i(t_2)-x_i(t_1)| +  |x_j(t_1-\tau_{ij}(t_1))-x_j(t_2-\tau_{ij}(t_2)) |   \\
  &\leq \svel \left[ |t_2-t_1| + |t_1-\tau_{ij}(t_1)-t_2+\tau_{ij}(t_2)| \right]   \\
  &\leq \svel \left[ |t_2-t_1| + |t_1-\lambda\tau_{ij}(t_1)-t_2+\lambda\tau_{ij}(t_2)|  + |(\lambda-1)(\tau_{ij}(t_1) - \tau_{ij}(t_2)) |  \right]    \\
  &= \svel \left[ t_2-t_1+ t_1-\lambda\tau_{ij}(t_1)-t_2+\lambda\tau_{ij}(t_2) + (\lambda-1)(\tau_{ij}(t_2) - \tau_{ij}(t_1))   \right]    \\
  & = \svel (2\lambda-1) \left( \tau_{ij}(t_2)-\tau_{ij}(t_1)  \right).
\end{align*}
This implies $\cvel^{-1}\svel (2\lambda-1) \geq 1$,
which is a contradiction to $\lambda \in \left[ 1, \frac{\cvel+\svel}{2\svel} \right)$.
Consequently, $t \mapsto t-\lambda\tau_{ij}(t)$ is strictly increasing in $t$.
Causality follows with $\lambda:=1$.
\end{proof}

\begin{corollary}\label{cor:retarded-increments}
Denote
\begin{equation*}
   \kappa:=\frac{\cvel-\svel}{\cvel+\svel} \in (0,1].
\end{equation*}
Then, for every $0\leq t_1 \leq t_2$,
\begin{equation}\label{eq:retarded-increments}
    \kappa(t_2-t_1)\leq r_{ij}(t_2)-r_{ij}(t_1) \leq\kappa^{-1}(t_2-t_1).
\end{equation}
Moreover,
\begin{equation}\label{eq:delay-distance}
 \frac{|x_i(t)-x_j(t)|}{\cvel+\svel}\leq\tau_{ij}(t)
 \leq\frac{|x_i(t)-x_j(t)|}{\cvel-\svel}.
\end{equation}
\end{corollary}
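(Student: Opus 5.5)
The two claims are handled separately, and both come straight from the light-cone relation \eqref{def:tauij} and the subluminal bound \eqref{eq:subluminal}, which makes every trajectory $\svel$-Lipschitz.

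\emph{Increment bounds \eqref{eq:retarded-increments}.} Fix $0\le t_1<t_2$ and set $\Delta t:=t_2-t_1$ and $\Delta\tau:=\tau_{ij}(t_2)-\tau_{ij}(t_1)$, so that $r_{ij}(t_2)-r_{ij}(t_1)=\Delta t-\Delta\tau$. The reverse triangle inequality gives the same chain as in the proof of Lemma~\ref{lem:causality}, without the $\lambda$-splitting:
\begin{equation*}
\cvel|\Delta\tau|\le \svel\,\Delta t+\svel\,|\Delta t-\Delta\tau|.
\end{equation*}
By Lemma~\ref{lem:causality} with $\lambda=1$ we have $\Delta t-\Delta\tau>0$, so the last absolute value can be dropped. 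Now split by the sign of $\Delta\tau$.
\begin{itemize}
\item If $\Delta\tau\ge0$, then $(\cvel+\svel)\Delta\tau\le 2\svel\Delta t$. Hence $\Delta t-\Delta\tau\ge \bigl(1-\tfrac{2\svel}{\cvel+\svel}\bigr)\Delta t=\kappa\Delta t$. The upper bound is trivial here, since $\Delta t-\Delta\tau\le\Delta t\le\kappa^{-1}\Delta t$.
\item If $\Delta\tau<0$, then $\cvel(-\Delta\tau)\le 2\svel\Delta t+\svel(-\Delta\tau)$, i.e.\ $-\Delta\tau\le \tfrac{2\svel}{\cvel-\svel}\Delta t$. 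Hence $\Delta t-\Delta\tau\le \tfrac{\cvel+\svel}{\cvel-\svel}\Delta t=\kappa^{-1}\Delta t$. The lower bound is trivial here, since $\Delta t-\Delta\tau\ge\Delta t\ge\kappa\Delta t$.
\end{itemize}
The case $t_1=t_2$ is trivial.

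\emph{Delay--distance bounds \eqref{eq:delay-distance}.} Write
\begin{equation*}
\cvel\tau_{ij}(t)=|x_i(t)-x_j(t-\tau_{ij}(t))|.
\end{equation*}
Compare $x_j(t-\tau_{ij}(t))$ with $x_j(t)$; by the $\svel$-Lipschitz property these differ by at most $\svel\tau_{ij}(t)$. The triangle inequality then yields
\begin{equation*}
|x_i(t)-x_j(t)|-\svel\tau_{ij}(t)\le\cvel\tau_{ij}(t)\le|x_i(t)-x_j(t)|+\svel\tau_{ij}(t),
\end{equation*}
and rearranging gives both inequalities.

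The one point to check is that the Lipschitz bound applies on the retarded interval, which may extend to negative times. It does: the compatibility condition \eqref{IC:comp} together with \eqref{cond:sublum} makes the histories $\svel$-Lipschitz on $[-S_0,0]$ as well, and $t-\tau_{ij}(t)\ge -S_0$ by the remark preceding Proposition~\ref{thm:ex}. Beyond this, there is no real obstacle. The only care needed in the first part is the sign case split and invoking causality to remove the absolute value.
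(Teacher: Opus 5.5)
Your proposal is correct. The delay--distance bounds and the upper increment bound match the paper's argument. For the upper bound the paper skips your case split, because $\cvel\bigl(\tau_{ij}(t_1)-\tau_{ij}(t_2)\bigr)\le\cvel|\Delta\tau|$ holds whatever the sign of $\Delta\tau$.

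The genuine difference is the lower bound $r_{ij}(t_2)-r_{ij}(t_1)\geq\kappa(t_2-t_1)$.

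\begin{itemize}
\item The paper uses the strengthened Lemma~\ref{lem:causality} for every $\lambda\in\bigl[1,\frac{\cvel+\svel}{2\svel}\bigr)$. This gives $R>(1-\lambda^{-1})h$, and then it lets $\lambda$ tend to the endpoint. That argument needs $\svel>0$, so the case $\svel=0$ is treated separately.
\item You read the lower bound directly off the same reverse-triangle estimate $\cvel|\Delta\tau|\le\svel\Delta t+\svel|\Delta t-\Delta\tau|$ that gives the upper bound. You need only plain causality ($\lambda=1$) to drop the absolute value.
\end{itemize}

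Your route is slightly more economical. One estimate yields both bounds with the same sharp constants, there is no limiting argument, and $\svel=0$ needs no special handling. Even the appeal to causality could be dropped: if $\Delta\tau>\Delta t$, your estimate would give $\cvel\Delta\tau\le\svel\Delta\tau$, which is impossible. The paper's route instead reuses the $\lambda$-strengthened lemma it has just proved, so there the corollary is an immediate consequence of that lemma.

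Your remark on the Lipschitz bound at negative times is fine, though it is simpler than you make it. Conditions \eqref{cond:sublum} and \eqref{IC:comp} hold on all of $(-\infty,0]$, so you do not need the localization $t-\tau_{ij}(t)\ge -S_0$.
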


\begin{proof}
If $\svel=0$, the position trajectories and the delays are constant,
and the assertion is immediate. Suppose $\svel>0$ and write
\begin{equation*}
h:=t_2-t_1>0,\qquad R:=r_{ij}(t_2)-r_{ij}(t_1).
\end{equation*}
By Lemma~\ref{lem:causality}, for every $1\leq\lambda< \frac{\cvel+\svel}{2\svel}$,
\begin{equation*}
0<h-\lambda\bigl(\tau_{ij}(t_2)-\tau_{ij}(t_1)\bigr)
=(1-\lambda)h+\lambda R.
\end{equation*}
Consequently, $R>(1-\lambda^{-1})h$. Letting $\lambda\to \frac{\cvel+\svel}{2\svel}$ gives
\begin{equation*}
R\geq\frac{\cvel-\svel}{\cvel+\svel}h=\kappa h,
\end{equation*}
which is the lower bound in \eqref{eq:retarded-increments}.

To obtain the upper bound, use \eqref{def:tauij}, the reverse
triangle inequality, and the $\svel$-Lipschitz continuity of the
position trajectories,
\begin{align*}
\cvel(R-h)
&=\cvel\bigl(\tau_{ij}(t_1)-\tau_{ij}(t_2)\bigr)\\
&\leq |x_i(t_2)-x_i(t_1)|
  +|x_j(r_{ij}(t_2))-x_j(r_{ij}(t_1))|\\
&\leq\svel(h+R).
\end{align*}
Here we used $R>0$ in the last inequality. Rearranging yields
$(\cvel-\svel)R\leq(\cvel+\svel)h$, which proves the upper bound
in \eqref{eq:retarded-increments}.

For \eqref{eq:delay-distance}, use the triangle inequality in \eqref{def:tauij},
\begin{align*}
  \cvel\tau_{ij} &=|x_i(t)-x_j(t-\tau_{ij})|\\
    &\leq |x_i(t)-x_j(t)| + |x_j(t)-x_j(t-\tau_{ij})| \\
    & \leq  |x_i(t)-x_j(t)| + \svel\tau_{ij}.
\end{align*}
Conversely,
\begin{align*}
   |x_i(t)-x_j(t)| &\leq |x_i(t)-x_j(t-\tau_{ij})|+|x_j(t-\tau_{ij})-x_j(t)|\\
   &\leq\cvel\tau_{ij} + \svel\tau_{ij}.
\end{align*}
Thus $(\cvel-\svel)\tau_{ij} \leq |x_i(t)-x_j(t)| \leq(\cvel+\svel)\tau_{ij}$,
and \eqref{eq:delay-distance} follows since $\cvel>\svel$.
\end{proof}

Let us note that the lower estimate in \eqref{eq:delay-distance} gives
\[
    \dx(t)\leq(\cvel+\svel) \max_{i,j \in [N]} \tau_{ij}(t).
\]
Consequently, a uniform bound on the delays immediately gives a uniform bound on the spatial diameter $\dx$.

In the sequel we will use the following auxiliary result.

\begin{lemma}\label{lem:convex-integral}
Let $\mathcal K \subset \mathbb{R}^d$ be a nonempty, closed and convex set, and let $a<b$. Let $w:[a,b]\to[0,\infty)$ and $v:[a,b]\to\mathbb{R}^d$ be continuous
functions such that $v(t)\in\mathcal K$ for all $t\in[a,b]$. Denote $m := \int_a^b w(t)\,\d t .$ Then
\[
   \int_a^b w(t)\,v(t)\,\d t \in m\,\mathcal K.
\]
\end{lemma}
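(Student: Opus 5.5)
The plan is to argue by separation. The degenerate case $m=0$ is handled first: since $w\ge0$ is continuous, $w\equiv0$ on $[a,b]$, so the integral is $0\in 0\cdot\mathcal K=\{0\}$ (using $\mathcal K\neq\emptyset$). For $m>0$ it suffices to show that the average $z:=m^{-1}\int_a^b w(t)v(t)\,\d t$ belongs to $\mathcal K$.

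Suppose $z\notin\mathcal K$. Since $\mathcal K$ is nonempty, closed and convex, the Hahn--Banach separation theorem in $\mathbb R^d$ gives a vector $e\in\mathbb R^d$ and a constant $\alpha\in\mathbb R$ such that
\[
\langle e,y\rangle\le\alpha \ \mbox{ for all } y\in\mathcal K, \qquad \langle e,z\rangle>\alpha .
\]
Equivalently, one can take the metric projection of $z$ onto $\mathcal K$. Since $v(t)\in\mathcal K$ and $w(t)\ge0$, integrating $w(t)\langle e,v(t)\rangle\le\alpha\, w(t)$ over $[a,b]$ yields
\[
\Big\langle e,\int_a^b w v\,\d t\Big\rangle\le\alpha m .
\]
Dividing by $m$ gives $\langle e,z\rangle\le\alpha$, which contradicts the choice of $e$. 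Hence $z\in\mathcal K$, and so $\int_a^b wv\,\d t=mz\in m\mathcal K$.

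The only points that need care are the following. Continuity of $w$ and $v$ guarantees that all integrals exist as Riemann integrals and commute with the linear functional $\langle e,\cdot\rangle$. Closedness of $\mathcal K$ is essential: the average could otherwise land on the boundary of $\mathcal K$ outside it, and this is exactly what strict separation excludes. There is no real obstacle here; the separation step is the core of the argument.
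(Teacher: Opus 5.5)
Your separation argument is correct, but it takes a different route from the paper's proof.

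The paper argues on the primal side. It approximates $\int_a^b w v\,\d t$ and $m$ by Riemann sums $S_n$ and $M_n$. For large $n$, the quotient $S_n/M_n$ is a finite convex combination of the points $v(t_l)\in\mathcal K$, so it lies in $\mathcal K$. Since $S_n/M_n\to m^{-1}\int_a^b wv\,\d t$, closedness finishes the proof.

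You argue on the dual side instead. You use that a nonempty closed convex set is the intersection of the closed half-spaces containing it, so you only need linearity and monotonicity of the integral.

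The paper's route is more elementary: it uses finite convexity plus a limit, with no separation theorem. It does, however, rely on continuity (or at least Riemann integrability) to make the Riemann sums converge. Your route never discretizes, so it carries over verbatim to merely integrable $w\geq0$ and $v$ with $v(t)\in\mathcal K$ almost everywhere. Via Hahn--Banach it also extends to Bochner integrals in Banach spaces. The price is invoking separation.

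One correction to your closing remark. Closedness is essential for your strict-separation step, but not for the statement itself. For continuous $v$, the set $\co v([a,b])$ is compact, convex and contained in $\mathcal K$. Applying the lemma to that set shows the average can never land outside a merely convex $\mathcal K$.
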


\begin{proof}
If $m=0$, then $w\equiv 0$ and there is nothing to prove. Assume therefore $m>0$.

For $n\in\mathbb{N}$, set $\Delta_n := (b-a)/n$ and $t_l := a + l\Delta_n$, $l=0,\dots,n-1$,
and define the Riemann sums
\[
   S_n := \sum_{l=0}^{n-1} \Delta_n\, w(t_l)\, v(t_l),
   \qquad
   M_n := \sum_{l=0}^{n-1} \Delta_n\, w(t_l),
\]
and
\[
   \lim_{n\to\infty} S_n = \int_a^b w(t)\,v(t)\,\d t,
   \qquad
   \lim_{n\to\infty} M_n = m > 0 .
\]
In particular, there exists $n_0\in\mathbb{N}$ such that $M_n>0$ for all $n\ge n_0$.
For such $n$ we write
\[
   \frac{S_n}{M_n} = \sum_{l=0}^{n-1} \lambda^{(n)}_{l}\, v(t_l),
   \qquad
   \lambda^{(n)}_{l} := \frac{\Delta_n\, w(t_l)}{M_n}.
\]
The coefficients $\lambda^{(n)}_{l}$ are nonnegative and, by the definition of $M_n$,
\[
   \sum_{l=0}^{n-1} \lambda^{(n)}_{l} = 1 .
\]
Hence $S_n/M_n$ is a finite convex combination of the points $v(t_l)\in\mathcal K$,
and the convexity of $\mathcal K$ gives $S_n/M_n \in \mathcal K$ for all $n\ge n_0$.
Passing to the limit $n\to\infty$, we obtain
\[
   \frac{S_n}{M_n} \; \longrightarrow \; \frac{1}{m}\int_a^b w(t)\,v(t)\,\d t,
\]
and since $\mathcal K$ is closed, the limit belongs to $\mathcal K$.
Multiplying by $m$ concludes the proof.
\end{proof}

Let us now define the set of retarded velocity trajectories $\HH(t)$, its convex hull $\KK(t)$ and its diameter $\LL(t)$,
\begin{equation}\label{def:HKL}
    \HH(t):=\bigcup_{i\ne j}\{v_j(s):r_{ij}(t)\leq s\leq t\},\qquad   \KK(t):=\co\HH(t),\qquad \LL(t):=\diam\HH(t).
\end{equation}
Obviously, for all $t\geq 0$ the set $\HH(t)$ is compact and has the same diameter as its compact convex hull $\KK(t)$.
The following lemma establishes nonexpansivity of $\KK(t)$ along the solutions of \eqref{eq:CS1}--\eqref{cond:sublum}.
The proof relies on the causality established in Lemma \ref{lem:causality}.
However, it does \emph{not} require a priori boundedness of the delays.

\begin{lemma}[Nonexpansivity of the velocity convex hull]
\label{lem:nonexp}
Let $(\solx(t),\solv(t))_{t\geq 0}$ be a solution of \eqref{eq:CS1}--\eqref{cond:sublum}.
Then
\(   \label{eq:nonexp}
   \KK(t)\subseteq\KK(t_0)   \qquad\mbox{for every }0\leq t_0\leq t.
\)
Consequently,
\begin{equation}\label{eq:L-nonincreasing}
   \LL(t)\leq\LL(t_0)   \qquad\mbox{for every } 0\leq t_0\leq t.
\end{equation}
\end{lemma}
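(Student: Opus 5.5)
It suffices to show that for every $t\geq t_0$ and every $i\ne j$, $v_j(s)\in\KK(t_0)$ for all $s\in[r_{ij}(t),t]$. Because $r_{ij}$ is increasing (Lemma~\ref{lem:causality}), $r_{ij}(t)\geq r_{ij}(t_0)$, so every such $s$ lies either in $[r_{ij}(t_0),t_0]$, where $v_j(s)\in\HH(t_0)\subseteq\KK(t_0)$ by definition, or in $(t_0,t]$. The claim therefore reduces to the forward-invariance statement
\[
   v_k(s)\in\KK(t_0)\qquad\mbox{for all } k\in[N],\ s\geq t_0 .
\]
Since $N\geq2$, every $k$ is some $j$ with $j\neq i$ for some $i$, and so $v_k(t_0)\in\HH(t_0)$. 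Once the invariance holds, $\HH(t)\subseteq\KK(t_0)$, hence $\KK(t)\subseteq\KK(t_0)$ by convexity, and \eqref{eq:L-nonincreasing} follows because $\LL=\diam\KK$.

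For the invariance I would use a strict-separation/contradiction argument built on Duhamel's formula. Fix $\varepsilon>0$ and set $\KK_\varepsilon:=\{y:\mathrm{dist}(y,\KK(t_0))\le\varepsilon\}$, which is closed and convex. Let $T:=\sup\{s\geq t_0: v_k(\sigma)\in\KK_\varepsilon \mbox{ for all } k,\ \sigma\in[t_0,s]\}$ and suppose $T<\infty$. Write $a_k(\sigma):=\sum_{j\ne k}\psi(|x_j(r_{kj}(\sigma))-x_k(\sigma)|)$, which satisfies $0<a_k\le N-1$. Equation \eqref{eq:CS2} then reads $\dot v_k=-a_kv_k+\sum_{j\neq k}\psi(\cdot)v_j(r_{kj})$, so for $s\in[T,T+\delta]$,
\[
  v_k(s)=e^{-\int_T^s a_k}v_k(T)+\int_T^s e^{-\int_\sigma^s a_k}\sum_{j\ne k}\psi(\cdot)\,v_j(r_{kj}(\sigma))\,\d\sigma .
\]
The weights add to one: $e^{-\int_T^s a_k}+\int_T^s e^{-\int_\sigma^s a_k}a_k(\sigma)\,\d\sigma=1$. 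The retarded velocities $v_j(r_{kj}(\sigma))$ with $\sigma\le s$ have arguments at most $\sigma$. Each such argument is either $\le T$, so the velocity lies in $\KK_\varepsilon$ (or in $\HH(t_0)$ if the argument is below $t_0$, again by monotonicity of $r_{kj}$), or it lies in $(T,s]$.

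The main obstacle is this last case. The retarded argument may exceed $T$, and for the current velocities themselves we only know they are close to $\KK_\varepsilon$ by continuity. I would close the gap quantitatively. Let $\eta(s):=\max_k\sup_{\sigma\in[t_0,s]}\mathrm{dist}(v_k(\sigma),\KK(t_0))$. Lemma~\ref{lem:convex-integral}, applied with the closed convex set $\KK_{\eta(s)}$ and the nonnegative weights above, shows that $v_k(s)$ is a convex combination of points of $\KK_{\eta(s)}$, since the distance function to a convex set is convex. This yields only $\eta(s)\le\eta(s)$, which is not enough, so I would instead argue via strict decrease. At a first time where $\mathrm{dist}(v_k,\KK(t_0))$ reaches a new maximum $\eta>0$, take a supporting functional $\ell$ with $\ell\le c$ on $\KK(t_0)$ and $\ell(v_k(s))=c+\eta$ maximal among all agents and times up to $s$. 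Then $\frac{\d}{\d s}\ell(v_k)=\sum_j\psi(\cdot)\,(\ell(v_j(r_{kj}))-\ell(v_k))\le0$. Turning this inequality into a contradiction is the delicate step, because it only gives non-increase. I would handle it with the $\varepsilon$-enlargement: show that $\max_k\max_{\sigma\le s}\ell(v_k(\sigma))$ is nonincreasing in $s$ for each fixed unit $\ell$. This is a Dini-derivative argument on $\Phi_\ell(s):=\max_k\sup_{\sigma\in[t_0,s]}\ell(v_k(\sigma))$, using that at a time where the maximum is attained at the current time, the derivative of $\ell(v_k)$ there is $\le0$. Comparing with $\Phi_\ell(s)\le\Phi_\ell(t_0)+\varepsilon(s-t_0)$ and letting $\varepsilon\to0$ yields $\sup_{s\geq t_0}\ell(v_k(s))\le\max_{\KK(t_0)}\ell$ for every unit vector $\ell$. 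Intersecting over all $\ell$ gives $v_k(s)\in\KK(t_0)$, as required.
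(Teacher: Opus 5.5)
Your reduction to forward invariance, $v_k(s)\in\KK(t_0)$ for $s\geq t_0$, uses the monotonicity of $r_{ij}$ and is exactly the paper's final step. Your last argument for the invariance (supporting functionals plus a barrier) is correct in substance, but it follows a different route from the paper.

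One slip needs fixing. The baseline in your comparison must be $c_\ell:=\max_{\KK(t_0)}\ell$, not $\Phi_\ell(t_0)=\max_k\ell(v_k(t_0))$. When $r_{kj}(s)<t_0$, the retarded velocity comes from $\HH(t_0)$. Its $\ell$-component can exceed every $\ell(v_k(t_0))$ and pull $\ell(v_k)$ upward. So, as written, both $\Phi_\ell(s)\le\Phi_\ell(t_0)+\varepsilon(s-t_0)$ and the claim that the derivative is $\le0$ where the running maximum is attained can fail.

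With $c_\ell$ the argument is clean:
\begin{itemize}
\item Set $g_k(s):=\ell(v_k(s))-c_\ell-\varepsilon(1+s-t_0)$. This is $\le-\varepsilon<0$ at $s=t_0$, because $v_k(t_0)\in\HH(t_0)$.
\item At a first zero $s^*$ of some $g_k$, you get $\ell(\dot v_k(s^*))\geq\varepsilon$.
\item On the other hand, \eqref{eq:CS2}, causality, $g_j\le0$ on $[t_0,s^*]$, and $\ell\le c_\ell$ on $\HH(t_0)$ together give $\ell(\dot v_k(s^*))\le0$, a contradiction.
\item Letting $\varepsilon\to0$ and intersecting the half-spaces $\{\ell\le c_\ell\}$ over all unit $\ell$ yields $v_k(s)\in\KK(t_0)$.
\end{itemize}
Your earlier attempts (the $\varepsilon$-enlarged continuation and the distance-function estimate) are then unnecessary and can be dropped.

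The paper works with sets and the integral form instead. It uses variation of constants from $t_0$ together with Lemma~\ref{lem:convex-integral} to get $v_i(t)\in E_i(t,t_0)\KK_0+(1-E_i(t,t_0))\mathcal C_T$ with $E_i\geq 1-q$. From this it derives the self-inclusion $\mathcal C_T\subseteq(1-q)\KK_0+q\mathcal C_T$, then iterates and uses compactness.

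Your maximum-principle route is shorter and more elementary. It avoids the convex-integral lemma, the auxiliary hull $\mathcal C_T$ and the iteration. The price is that it relies on pointwise differentiability of $v_k$ and on the half-space description of compact convex sets.

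The paper's route, however, yields an explicit convex-combination representation with the normalized weights \eqref{eq:nonexp-normalization} and the retained term $E_i(t,t_0)v_i(t_0)$. This is reused directly in Lemma~\ref{lem:general-exponential} to obtain strict contraction. Your scalar barrier argument gives nonexpansivity only and would not feed into that proof as it stands.
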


\begin{proof}
Fix $t_0\geq0$ and set $\KK_0 := \KK(t_0)$.
We first prove that
\begin{equation}\label{eq:future-velocities-in-K}
   v_i(t)\in \KK_0   \qquad\mbox{for every }t\geq t_0   \mbox{ and }i\in[N].
\end{equation}
For $i\neq j$, introduce
\(  \label{def:psiij}
   \psi_{ij}(t)   :=      \psi\left(   |x_j(r_{ij}(t))-x_i(t)|   \right),
   \qquad
   \psi_i(t):=\sum_{j\neq i}\psi_{ij}(t).
\)
Then \eqref{eq:CS2} can be written as
\[
   \dot v_i(t)   =  -\psi_i(t)v_i(t)  +  \sum_{j\neq i}\psi_{ij}(t)v_j(r_{ij}(t)).
\]
For $t\geq s\geq t_0$, define
\(  \label{eq:Ei}
   E_i(t,s)   :=  \exp\left( -\int_s^t \psi_i(\sigma)\,\d \sigma   \right).
\)
The variation-of-constants formula gives
\begin{equation}\label{eq:voc}
   v_i(t) =   E_i(t,t_0)v_i(t_0)+   \sum_{j\neq i}   \int_{t_0}^t  E_i(t,s)\psi_{ij}(s)  v_j(r_{ij}(s))\,\d s.
\end{equation}
Note that $\frac{\partial}{\partial s}E_i(t,s)   =   \psi_i(s)E_i(t,s)$, hence
\[
   \int_{t_0}^tE_i(t,s)\psi_i(s)\,\d s   =   E_i(t,t)-E_i(t,t_0)   =   1-E_i(t,t_0),
\]
and we have
\begin{equation}\label{eq:nonexp-normalization}
   E_i(t,t_0) +   \sum_{j\neq i}   \int_{t_0}^tE_i(t,s)\psi_{ij}(s)\,\d s =  E_i(t,t_0)   +  \int_{t_0}^tE_i(t,s)\psi_i(s)\,\d s  =1.
\end{equation}

Fix now an arbitrary $T>t_0$ and introduce the compact convex set
\begin{equation}\label{def:nonexp-CT}
   \mathcal{C}_T  :=   \co\left(   \KK_0 \cup   \{v_i(s);\,i\in[N],\ t_0\leq s\leq T\}   \right).
\end{equation}
We claim that every velocity appearing on the right-hand side of
\eqref{eq:voc}, for $t\in[t_0,T]$, belongs to $\mathcal{C}_T$.
Indeed, if $r_{ij}(s)\geq t_0$, then $t_0\leq r_{ij}(s)\leq s\leq T$
and therefore $v_j(r_{ij}(s))\in \mathcal{C}_T$
by the definition of $\mathcal{C}_T$.
On the other hand, if $r_{ij}(s)<t_0$, the monotonicity of $r_{ij}$, implied by causality,
gives $r_{ij}(t_0)\leq r_{ij}(s)<t_0$.
Consequently,
\[
   v_j(r_{ij}(s))\in\HH(t_0)\subseteq \KK_0\subseteq \mathcal{C}_T.
\]
Moreover, by definition, $v_i(t_0)\in\HH(t_0)\subseteq \KK_0$. It now follows from
\eqref{eq:nonexp-normalization},
by applying Lemma~\ref{lem:convex-integral} to each of the integrals in \eqref{eq:voc}, that
\begin{equation}\label{eq:nonexp-first-combination}
   v_i(t)   \in   E_i(t,t_0)\KK_0   +   \bigl(1-E_i(t,t_0)\bigr)\mathcal{C}_T,   \qquad t\in[t_0,T].
\end{equation}
Note that
\[
  1 \geq E_i(t,t_0)   \geq   e^{-(N-1)(T-t_0)}     =:1-q \in(0,1).
\]
Relation \eqref{eq:nonexp-first-combination} implies that for every $t\in[t_0,T]$ there exist $k_t\in \KK_0$ and $c_t\in \mathcal{C}_T$ such that
\[  \begin{aligned}
   v_i(t)  &=  E_i(t,t_0) k_t   +   \bigl(1-E_i(t,t_0)\bigr) c_t \\
   &= (1-q) k_t + q  \left[  \frac{E_i(t,t_0)-1+q}{q} k_t   +  \frac{1-E_i(t,t_0)}{q} c_t   \right].
\end{aligned}  \]
Since $\KK_0\subseteq \mathcal{C}_T$, the expression in the brackets is a convex combination of elements of $\mathcal{C}_T$
and thus itself an element of $\mathcal{C}_T$.
Therefore,
\(    \label{eq:incl1}
   v_i(t)\in (1-q) \KK_0+q\mathcal{C}_T   \qquad\mbox{for all } t\in[t_0,T].
\)
Moreover, $\KK_0\subseteq \mathcal{C}_T$ also implies
\(    \label{eq:incl2}
   \KK_0 \subseteq (1-q) \KK_0 + q\mathcal{C}_T
\)
Using \eqref{eq:incl1} and \eqref{eq:incl2} in \eqref{def:nonexp-CT} gives
\begin{equation}\label{eq:nonexp-self-inclusion}
   \mathcal{C}_T  =   \co\left(   \KK_0\cup   \{v_i(s);\,i\in[N],\ t_0\leq s\leq T\}   \right)    \subseteq (1-q) \KK_0+q\mathcal{C}_T,
\end{equation}
where we also used the convexity of the right-hand set.

Iterating
\eqref{eq:nonexp-self-inclusion} and using the convexity of $\KK_0$ gives
\begin{equation}\label{eq:nonexp-iteration}
   \mathcal{C}_T  \subseteq   (1-q )\KK_0+q \left[(1-q) \KK_0+q\mathcal{C}_T\right]=(1-q^2)\KK_0+q^2\mathcal{C}_T\subseteq \dots \subseteq (1-q^n)\KK_0+q^n\mathcal{C}_T,
\end{equation}
for all $n\in\N$. To pass to the limit $n\to\infty$, fix any $z\in \mathcal{C}_T$.
Then \eqref{eq:nonexp-iteration} implies that for every $n\in\N$, there exist $k_n\in \KK_0$ and
$c_n\in \mathcal{C}_T$ such that
\[
   z=(1-q^n)k_n+q^nc_n.
\]
Since $\KK_0$ is compact, along a subsequence one has $k_n\to k\in \KK_0$.
Moreover, $\mathcal{C}_T$ is bounded and $q^n\to0$, so that $q^nc_n\to0$.
Passing to the limit gives $z=k\in \KK_0$. Hence $\mathcal{C}_T\subseteq \KK_0$,
and the reverse inclusion implied by \eqref{def:nonexp-CT} gives $\mathcal{C}_T=\KK_0$.
Since $T>t_0$ was arbitrary, this proves \eqref{eq:future-velocities-in-K}.

To conclude the proof, it remains to include the complete velocity histories from $\HH(t)$ for $t\geq t_0$.
Therefore, fix $t\geq t_0$ and let $v_j(s)\in\HH(t)$. Then, by the definition \eqref{def:HKL}, for some $i\neq j \in [N]$,
\[
   r_{ij}(t)\leq s\leq t.
\]
If $s\geq t_0$, relation \eqref{eq:future-velocities-in-K} directly gives
$v_j(s)\in \KK_0$. On the other hand, if $s<t_0$, the monotonicity of $r_{ij}$ yields
\[
   r_{ij}(t_0)\leq r_{ij}(t)\leq s<t_0,
\]
and hence, by \eqref{def:HKL}, $v_j(s)\in\HH(t_0)\subseteq \KK_0$.
We have therefore proved
\[
   \HH(t)\subseteq \KK_0=\KK(t_0) \qquad\mbox{for all } t\geq t_0,
\]
and taking the convex hulls gives \eqref{eq:nonexp}.
Finally, \eqref{eq:L-nonincreasing} follows directly from
\[
   \diam\KK(t)=\diam\HH(t)=\LL(t).
\]
\end{proof}

Next we show that the nonexpansivity established in Lemma \ref{lem:nonexp} can be strengthened
to strict contractivity as soon as the delays are uniformly bounded.

\begin{lemma}[Strict contraction under bounded delays]
\label{lem:general-exponential}
Assume the hypotheses of Lemma~\ref{lem:nonexp}, and assume that, for some constant $\overline\tau\geq0$
\begin{equation}\label{eq:general-uniform-bounds}
    \tau_{ij}(t)\leq\overline\tau 
    \qquad
    \mbox{for all } t\geq 0 \mbox{ and } i\ne j.
\end{equation}
Denote
\begin{equation} \label{eq:general-contraction-constants}
    T:=2\overline\tau + 1,
    \qquad
    \eta:= N  \psi\left(  \cvel\overline\tau  \right)  e^{-NT}.
\end{equation}
Then $0<\eta<1$, and for any $t_0 \geq 0$,
\begin{equation}\label{eq:general-block-contraction}
    \LL(t_0+T)
    \leq
    (1-\eta)\LL(t_0). 
\end{equation}
\end{lemma}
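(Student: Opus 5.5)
The plan is to prove \eqref{eq:general-block-contraction} direction by direction. Since $\KK(t)$ is compact and convex, $\LL(t)$ equals the maximal width $\max_{|e|=1}\bigl(\max_{u\in\HH(t)}e\cdot u-\min_{u\in\HH(t)}e\cdot u\bigr)$.

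Fix $t_0\ge0$ and a unit vector $e$, and set $M:=\max_{\KK(t_0)}e\cdot u$, $m:=\min_{\KK(t_0)}e\cdot u$ and $D:=M-m\le\LL(t_0)$. By Lemma~\ref{lem:nonexp}, every velocity $v_i(s)$ with $s\ge t_0$, and every retarded velocity entering the equations after $t_0$, lies in $\KK(t_0)$. Hence $e\cdot u\in[m,M]$ for all $u\in\HH(t_0+T)$. It therefore suffices to show that, by symmetry, either $e\cdot u\le M-\eta D$ for all $u\in\HH(t_0+T)$, or $e\cdot u\ge m+\eta D$ for all such $u$. Either alternative gives width at most $(1-\eta)D\le(1-\eta)\LL(t_0)$ in direction $e$, and taking the maximum over $e$ yields the claim.

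The first step is a one-sided Gr\"onwall estimate. Writing $w_i:=M-e\cdot v_i$, equation \eqref{eq:CS2} gives
\[
\dot w_i(s)=-\psi_i(s)w_i(s)+\sum_{j\ne i}\psi_{ij}(s)\,w_j(r_{ij}(s)),
\]
and for $s\ge t_0$ all terms $w_j(r_{ij}(s))$ are nonnegative. Using $\psi\le1$, hence $\psi_i\le N-1$, this yields, for $t\ge s\ge t_0$,
\[
w_i(t)\ge e^{-(N-1)(t-s)}w_i(s),\qquad
w_i(t)\ge\int_{s}^{t}e^{-(N-1)(t-\sigma)}\psi_{ij}(\sigma)\,w_j(r_{ij}(\sigma))\,\d\sigma .
\]
The analogous estimates hold for $e\cdot v_i-m$.

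The second step is the choice of a reference agent. Take agent $1$ at time $t_0$. Since $v_1(t_0)\in\KK(t_0)$, either $w_1(t_0)\ge D/2$ or $e\cdot v_1(t_0)-m\ge D/2$; assume the former. By the first estimate, $w_1(s)\ge \tfrac D2e^{-(N-1)(s-t_0)}$ for all $s\ge t_0$. For $\sigma\ge t_0+\overline\tau$ we have $r_{i1}(\sigma)\ge t_0$, and by \eqref{def:tauij} also $|x_1(r_{i1}(\sigma))-x_i(\sigma)|=\cvel\tau_{i1}(\sigma)\le\cvel\overline\tau$. Since $\psi$ is nonincreasing, this gives $\psi_{i1}(\sigma)\ge\pp$. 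Inserting this into the integral estimate over $[t_0+\overline\tau,t_0+\overline\tau+1]$, and then propagating forward with the first estimate, gives for every $i\ne1$ and every $t\in[t_0+\overline\tau+1,\,t_0+T]$
\[
w_i(t)\ge \tfrac{D}{2}\,\pp\,e^{-(N-1)(t-t_0)}\ge c\,\pp\,e^{-NT}D .
\]
Agent $1$ itself satisfies the even better bound $w_1(t)\ge\tfrac D2 e^{-(N-1)T}$.

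The third step covers the whole history set. Every element of $\HH(t_0+T)$ has the form $v_j(s)$ with $s\ge r_{ij}(t_0+T)\ge t_0+T-\overline\tau=t_0+\overline\tau+1$ and $s\le t_0+T$. Hence all of them satisfy $e\cdot v_j(s)\le M-\eta D$ with $\eta$ of the form $\mathrm{const}\cdot N\pp e^{-NT}$. The main bookkeeping obstacle is matching the exact constant $\eta=N\pp e^{-NT}$ of \eqref{eq:general-contraction-constants}. If the crude factor $\tfrac12$ from the dichotomy does not suffice, I would instead select the agent and the sign via an averaging argument over all $N$ agents rather than fixing agent $1$; the structure of the proof is unaffected. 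Finally, $0<\eta<1$ holds because $\pp\le\psi(0)=1$ and $Ne^{-NT}\le Ne^{-N}<1$.
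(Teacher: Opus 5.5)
\textbf{Gap: the stated constant $\eta$ is not obtained.} Your framework is sound. The width characterization of $\LL$ is correct, and so is the nonnegativity of every term in the variation-of-constants formula for $w_i=M-e\cdot v_i$ after $t_0$ (this follows from Lemma~\ref{lem:nonexp} and causality). The one-sided bounds with rate $N-1$ are also correct. The problem is the explicit constant in \eqref{eq:general-contraction-constants}, which is part of the statement. With one reference agent and the dichotomy, what you actually prove is $w_i(t)\geq\frac{D}{2}\pp e^{-(N-1)T}$ on $[t_0+\overline\tau+1,t_0+T]$. That is contraction with
$$\eta'=\tfrac{1}{2}\pp e^{-(N-1)T}=\tfrac{e^{T}}{2N}\,\eta.$$
A single agent contributes a single source term, so no factor $N$ appears. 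Your claim that $\eta$ is ``of the form $\mathrm{const}\cdot N\pp e^{-NT}$'' is therefore unjustified. Indeed $\eta'\geq\eta$ only when $e^{T}\geq 2N$. This fails, for example, for $N=2$ with $\overline\tau<(\ln 4-1)/2$, and for every fixed $\overline\tau$ once $N$ is large. So the lemma with the stated $\eta$ is not yet proved; your fallback is only announced, not carried out.

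\textbf{How to close it.} Keep every source term, as the paper does. Drop only the part of the integral over $[t_0,t_0+\overline\tau]$. Then use $w_j(r_{ij}(\sigma))\geq E_j(r_{ij}(\sigma),t_0)\,w_j(t_0)$, together with $r_{ij}(\sigma)\leq\sigma$, $\psi_{ij}\geq\pp$ and $t-t_0-\overline\tau\geq 1$. For $t\in[t_0+T-\overline\tau,t_0+T]$ this gives
\begin{equation*}
w_i(t)\geq E_i(t,t_0)w_i(t_0)+\sum_{j\ne i}\int_{t_0+\overline\tau}^{t}E_i(t,\sigma)\psi_{ij}(\sigma)E_j(r_{ij}(\sigma),t_0)\,\d\sigma\;w_j(t_0)\geq\pp e^{-(N-1)T}\sum_{j=1}^N w_j(t_0),
\end{equation*}
and the same bound holds for $\tilde w_i:=e\cdot v_i-m$. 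No case split is then needed: bound both ends of the projected set at once. Since $w_j(t_0)+\tilde w_j(t_0)=D$, the width of $\HH(t_0+T)$ in direction $e$ is at most
$$\bigl(1-N\pp e^{-(N-1)T}\bigr)D\leq(1-\eta)D.$$

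Your averaging-plus-dichotomy variant would also work: choose the side whose sum is at least $ND/2$. But it works only because the rate $N-1$ leaves a slack factor $e^{T}/2\geq e/2>1$ that absorbs the $\frac{1}{2}$. That needs to be checked, not assumed.

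\textbf{Relation to the paper's proof.} The two-sided, all-agent estimate above is the scalar (support-function) shadow of the paper's argument. The paper writes
$$v_i(u)=\sum_j\alpha_{ij}(u)\,v_j(t_0)+\Bigl(1-\sum_j\alpha_{ij}(u)\Bigr)\zeta_i(u),\qquad \zeta_i(u)\in\KK(t_0),$$
with $\alpha_{ij}(u)\geq\pp e^{-NT}$ for every $j$, including $j=i$. It thereby gets directly the set inclusion $\KK(t_0+T)\subseteq\frac{\eta}{N}\sum_j v_j(t_0)+(1-\eta)\KK(t_0)$, which is two-sided automatically. Once completed, your version would even give the slightly better rate $N\pp e^{-(N-1)T}$.
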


\begin{proof}
We use the notation and results of the proof of Lemma~\ref{lem:nonexp}.
In particular, \eqref{eq:general-uniform-bounds} implies that $\psi_{ij}$ defined in \eqref{def:psiij} satisfy $\psi_{ij}(t) \geq \pp>0$ where we used \eqref{def:tauij} and the monotonicity and global positivity of the influence function $\psi$. 

For a fixed $t_0\geq0$ we set $\KK_0:=\KK(t_0)$,
and Lemma~\ref{lem:nonexp} gives $\KK(t)\subseteq \KK_0$ for all $t\geq t_0$. Arguing as in the derivation of \eqref{eq:nonexp-first-combination}, we obtain
\begin{equation}\label{eq:general-retained-atom}
    v_i(t)    \in    E_i(t,t_0) v_i(t_0)    +    \bigl(1-E_i(t,t_0)\bigr) \KK_0    \qquad\mbox{for all } t\geq t_0,
\end{equation}
where $E_i(t,t_0)$ given by \eqref{eq:Ei} satisfies
\begin{equation}\label{eq:general-E-lower}
    E_i(t,t_0) \geq e^{-N(t-t_0)}.
\end{equation}
For $u\geq t_0+\overline\tau$, the variation-of-constants formula \eqref{eq:voc} gives
\begin{equation}\label{eq:general-voc-expanded}
\begin{aligned}
    v_i(u)  &=   E_i(u,t_0) v_i(t_0)    +    \sum_{j\ne i}\int_{t_0}^{t_0+\overline\tau}    E_i(u,s)\psi_{ij}(s)v_j(r_{ij}(s))\,\d s
    \\
    &\qquad\qquad +    \sum_{j\ne i}\int_{t_0+\overline\tau}^{u}    E_i(u,s)\psi_{ij}(s)v_j(r_{ij}(s))\,\d s.
\end{aligned}
\end{equation}
For $s\geq t_0+\overline\tau$, the uniform bound on $\tau_{ij}$ imposed by assumption \eqref{eq:general-uniform-bounds} gives
\[
    r_{ij}(s)\geq s-\overline\tau\geq t_0.
\]
We may therefore apply \eqref{eq:general-retained-atom} at the time
$r_{ij}(s)$. Thus, for every $j\ne i$, there exists
$\zeta_{ij}(s)\in \KK_0$ such that
\begin{equation}\label{eq:general-retained-atom-pointwise}
    v_j(r_{ij}(s)) =   E_j(r_{ij}(s),t_0) v_j(t_0)    +    \bigl(1-E_j(r_{ij}(s),t_0)\bigr)\zeta_{ij}(s).
\end{equation}
Substituting \eqref{eq:general-retained-atom-pointwise} into
\eqref{eq:general-voc-expanded} gives
\begin{align*}
    v_i(u) &=   E_i(u,t_0) v_i(t_0)   +  \sum_{j\ne i}\int_{t_0}^{t_0+\overline\tau}  E_i(u,s)\psi_{ij}(s)v_j(r_{ij}(s))\,\d s    \\
    &\quad+    \sum_{j\ne i}    \left(
        \int_{t_0+\overline\tau}^{u}    E_i(u,s)\psi_{ij}(s)    E_j(r_{ij}(s),t_0)\,\d s    \right) v_j(t_0)    
    \\
    &\quad+    \sum_{j\ne i}\int_{t_0+\overline\tau}^{u}    E_i(u,s)\psi_{ij}(s)
        \bigl(1-E_j(r_{ij}(s),t_0)\bigr)  \zeta_{ij}(s)\,\d s.
\end{align*}
Define $\alpha_{ii}(u):=E_i(u,t_0)$ and, for $j\ne i$,
\(  \label{def:alphaij}
    \alpha_{ij}(u)  :=  \int_{t_0+\overline\tau}^{u}   E_i(u,s)\psi_{ij}(s)  E_j(r_{ij}(s),t_0)\,\d s.
\)
Then
\begin{equation}\label{eq:general-explicit-remainder}
    v_i(u)  =   \sum_{j=1}^N\alpha_{ij}(u) v_j(t_0) + R_i(u),
\end{equation}
where
\begin{align*}
    R_i(u)   &:=    \sum_{j\ne i}\int_{t_0}^{t_0+\overline\tau}   E_i(u,s) \psi_{ij}(s) v_j(r_{ij}(s))\,\d s
    \\
    &\quad+   \sum_{j\ne i}\int_{t_0+\overline\tau}^{u}   E_i(u,s)\psi_{ij}(s)   \bigl(1-E_j(r_{ij}(s),t_0)\bigr)    \zeta_{ij}(s)\,\d s.
\end{align*}
Every velocity $v_j(r_{ij}(s))$ appearing in the first integral is an element of $\KK_0$ by Lemma~\ref{lem:nonexp}, and
$\zeta_{ij}(s)\in \KK_0$ in the second integral.

Next we calculate the total mass of the coefficients of $v_j(r_{ij}(s))$ and $\zeta_{ij}(s)$ in $R_i(u)$.
The normalization \eqref{eq:nonexp-normalization} with $t:=u$ gives
\[
    E_i(u,t_0)  +    \sum_{j\ne i}\int_{t_0}^{u}    E_i(u,s)\psi_{ij}(s)\,\d s   =1.
\]
Consequently, recalling that $\alpha_{ii}(u)=E_i(u,t_0)$ and \eqref{def:alphaij}, we have
\begin{align*}
    &\sum_{j\ne i}\int_{t_0}^{t_0+\overline\tau}    E_i(u,s)\psi_{ij}(s)\,\d s+    \sum_{j\ne i}\int_{t_0+\overline\tau}^{u}   E_i(u,s)\psi_{ij}(s)
         \bigl(1-E_j(r_{ij}(s),t_0)\bigr)\,\d s\\
         &=\sum_{j\ne i}\int_{t_0}^{u}    E_i(u,s)\psi_{ij}(s)\,\d s-  \sum_{j\ne i}\alpha_{ij}(u)\\
         &=1-\sum_{j=1}^N\alpha_{ij}(u)      
\end{align*}
Thus $R_i(u)$ is a nonnegative combination of points of $\KK_0$ with
coefficients summing to $1-\sum_{j=1}^N\alpha_{ij}(u)$. Since $\KK_0$ is convex, there
exists $\zeta_i(u)\in \KK_0$ such that
\[
    R_i(u)   =    \left(1-\sum_{j=1}^N \alpha_{ij}(u)\right)\zeta_i(u).
\]
Equation \eqref{eq:general-explicit-remainder} therefore yields
\(  \label{eq:viu}
    v_i(u)  =    \sum_{j=1}^N\alpha_{ij}(u) v_j(t_0)
        + \left(1-\sum_{j=1}^N\alpha_{ij}(u)\right)\zeta_i(u).
\)
Consider now $v_i(u)\in \HH(t_0+T)$ where $u\in[t_0+T-\overline\tau,t_0+T]$. Then \eqref{eq:general-E-lower} gives
\[
    \alpha_{ii}(u) = E_i(u,t_0) \geq e^{-NT}.
\]
For $j\ne i$, using $r_{ij}(s)\leq s$ and \eqref{eq:general-E-lower}, \eqref{def:alphaij}, we obtain
\begin{align*}
    \alpha_{ij}(u)
    &\geq
    \pp\int_{t_0+\overline\tau}^u
    e^{-N(u-s)}
    e^{-N(r_{ij}(s)-t_0)}\,\d s
    \\
    &\geq
    \pp\int_{t_0+\overline\tau}^u e^{-N(u-t_0)}\,\d s
    \\
    &=
    \pp(u-t_0-\overline\tau)e^{-N(u-t_0)}
    \\
    &\geq {\pp} e^{-NT},
\end{align*}
where we used the definition $T=2\overline\tau+1$ in the last estimate.
Since $\pp \leq 1$, we can write $\alpha_{ij}(u) \geq {\pp} e^{-NT}$
for all $i$, $j\in [N]$, including $i=j$.

Define $\beta_{ij}(u):=\alpha_{ij}(u)-{\pp} e^{-NT} \geq 0$. Then \eqref{eq:viu} can be written as
\begin{align*}
    v_i(u) &=  {\pp} e^{-NT}\sum_{j=1}^N v_j(t_0)   +    \sum_{j=1}^N\beta_{ij}(u)v_j(t_0)
    +   \left(1-\sum_{j=1}^N\alpha_{ij}(u)\right)\zeta_i(u).
\end{align*}
The coefficients in the last two terms are nonnegative, and their sum is
\[
    \sum_{j=1}^N\beta_{ij}(u)    +    1-\sum_{j=1}^N\alpha_{ij}(u)
    =1-N{\pp} e^{-NT}
    = 1-\eta
    > 0
\]
since $\eta\leq Ne^{-N}\leq  e^{-1}<1$.
 Therefore, \eqref{eq:viu} becomes
\[
    v_i(u)   =    {\pp} e^{-NT}\sum_{j=1}^N v_j(t_0)   + \left( 1-\eta \right) \widehat\zeta_i(u),
\]
with 
\[
    \widehat\zeta_i(u)   := \left( 1-\eta \right)^{-1}
       \left[   \sum_{j=1}^N\beta_{ij}(u)v_j(t_0)    +    \left(1-\sum_{j=1}^N\alpha_{ij}(u)\right)\zeta_i(u)  \right].
\]
Since $v_i(t_0)\in \KK_0$, $\zeta_i(u)\in \KK_0$ for all $i\in[N]$, and $\KK_0$ is convex, the vectors
$\widehat\zeta_i(u)$ belong to $\KK_0$ for all $i\in[N]$.
Consequently, we have
\begin{equation*}
    v_i(u)   \in  \frac{\eta}{N}\sum_{j=1}^N v_j(t_0) + (1-\eta) \KK_0,
\end{equation*}
taking convex hull of $\HH(t_0+T)$, it gives
\begin{equation*} 
    \KK(t_0+T)    \subseteq    \frac{\eta}{N}\sum_{i=1}^N v_i(t_0) + (1-\eta)\KK(t_0),
\end{equation*}
and taking diameters finally proves \eqref{eq:general-block-contraction} since the diameter is invariant under translations.
\end{proof}

\begin{remark}
The contraction estimate \eqref{eq:general-block-contraction} could be used
for the usual bootstrap argument, like the one used in \cite{H:KRM, Cartabia},
to prove flocking in the sense of Definition \ref{def:flocking}.
In particular, one would use the bound $\dx(t)\leq\dx(0)+\int_0^t\dv(s)\,\d s$,
combined with $\dv(t)\leq\LL(t)$,
and the bound $\tau_{ij}(t) \leq \frac{\dx(t)}{\cvel-\svel}$ provided by \eqref{eq:delay-distance}.
However, this leads to a very restrictive small-data/large propagation speed criterion for flocking,
in the spirit of the one derived in \cite[Lemma 7]{H:KRM}.
As the goal of this paper is to provide an optimal flocking condition,
we do not follow this strategy here.
\end{remark}

Instead, in the rest of the paper we attempt to prove that the delays (and thus the position diameter) are indeed uniformly bounded,
so that Lemma \ref{lem:general-exponential} directly implies exponential convergence of the velocity diameter to zero.
Indeed, iteration at the times $nT$, with $T$ given by \eqref{eq:general-contraction-constants},
together with the nonexpansivity of $\LL$ established in Lemma \ref{lem:nonexp}, yields
\begin{equation*} 
   \dv(t) \leq \LL(t)    \leq    (1-\eta)^{\lfloor t/T\rfloor}\LL(0)  \qquad \mbox{for all } t\geq0.
\end{equation*}

\section{Two agents in one spatial dimension}\label{sec:1D}

In this section we establish the uniform boundedness of the delays for solutions of \eqref{eq:CS1}--\eqref{IC},
asserting only the minimal assumptions \eqref{cond:sublum}, \eqref{IC:comp} and the fat-tail condition \eqref{cond:flocking}.
Using the notation \eqref{def:psiij}, system \eqref{eq:CS1}--\eqref{eq:CS2} for $i\in\{1,2\}$ is written as
\(  \label{eq:CS1D}
   \dot x_i = v_i, \qquad
   \dot v_i = \psi_{ij} \left( v_j(r_{ij}) - v_i \right) \qquad\mbox{with } i \neq j\in \{1, 2\}.
\)
We introduce the function
\begin{equation}\label{def:Psic}
 \Psi_{\cvel}(\rho):=\int_0^\rho\psi(\cvel s)\,\d s
 =\frac1{\cvel}\int_0^{\cvel\rho}\psi(r)\,\d r,\qquad \rho\geq0,
\end{equation}


\begin{lemma}\label{lem:doppler}
Fix $i\ne j$ and let $I$ be a connected component of the set
\[
   \{t\geq 0; \; x_i(t)-x_j(r_{ij}(t)) \neq 0 \} = \{t\geq0; \; \tau_{ij}(t)>0\}.
\]
Then the sign
\[
   \varepsilon_{ij}:=\sgn\bigl(x_i(t)-x_j(r_{ij}(t))\bigr)\in\{-1,1\}
\]
is constant on $I$. For $t\in I$, define
\begin{equation}\label{def:J1D}
    \JJ_{ij}(t):=\Psi_{\cvel}(\tau_{ij}(t))  -\ln\left(\frac{\cvel-\varepsilon_{ij}v_i(t)}{\cvel}\right).
\end{equation}
Then for all $t\in I$ we have
\begin{equation}\label{eq:doppler-dissipation}
    \frac{\d}{\d t}\JJ_{ij}(t)  =-\psi_{ij}(t)\frac{|v_i(t)-v_j(r_{ij}(t))|^2}  {(\cvel-\varepsilon_{ij}v_i(t))(\cvel-\varepsilon_{ij}v_j(r_{ij}(t)))} \leq0
\end{equation}
\end{lemma}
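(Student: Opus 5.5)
The plan is to differentiate the light-cone relation \eqref{def:tauij} on $I$, read off $\dot\tau_{ij}$ and the Doppler factor $\dot r_{ij}$, and then compute $\frac{\d}{\d t}\JJ_{ij}$ directly.

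\emph{Step 1: constant sign.} The map $t\mapsto x_i(t)-x_j(r_{ij}(t))$ is continuous. Indeed, $r_{ij}$ is continuous (even Lipschitz) by Corollary~\ref{cor:retarded-increments}, and $x_j$ is continuous on $[-S_0,\infty)$. On the connected set $I$ this map does not vanish, so by the intermediate value theorem its sign $\varepsilon_{ij}$ is constant on $I$. Consequently, on $I$ the light-cone relation reads
\[
\cvel\tau_{ij}(t)=\varepsilon_{ij}\bigl(x_i(t)-x_j(t-\tau_{ij}(t))\bigr).
\]
In the one-dimensional setting of this section, $|\cdot|$ is just multiplication by the sign.

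\emph{Step 2: differentiability of $\tau_{ij}$ and the Doppler factor.} Set $F(t,\tau):=\cvel\tau-\varepsilon_{ij}\bigl(x_i(t)-x_j(t-\tau)\bigr)$.
\begin{itemize}
\item By the compatibility condition \eqref{IC:comp}, $x_j$ is $C^1$ on $[-S_0,\infty)$ with derivative $v_j$. Hence $F$ is $C^1$.
\item We have $\partial_\tau F=\cvel-\varepsilon_{ij}v_j(t-\tau)\geq\cvel-\svel>0$ by \eqref{eq:subluminal} and \eqref{cond:sublum}.
\item The implicit function theorem, together with uniqueness of the solution of \eqref{def:tauij}, then gives $\tau_{ij}\in C^1(I)$. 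At $t=0$ only one-sided derivatives are used.
\end{itemize}
Differentiating the identity of Step 1 gives $\cvel\dot\tau_{ij}=\varepsilon_{ij}\bigl(v_i-v_j(r_{ij})(1-\dot\tau_{ij})\bigr)$. Therefore
\[
\dot\tau_{ij}=\frac{\varepsilon_{ij}\bigl(v_i-v_j(r_{ij})\bigr)}{\cvel-\varepsilon_{ij}v_j(r_{ij})},\qquad
\dot r_{ij}=\frac{\cvel-\varepsilon_{ij}v_i}{\cvel-\varepsilon_{ij}v_j(r_{ij})}.
\]
The second formula is the Doppler factor and motivates the functional $\JJ_{ij}$.

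\emph{Step 3: the computation.} Since $\psi_{ij}(t)=\psi(|x_j(r_{ij})-x_i|)=\psi(\cvel\tau_{ij})$, we get
\[
\frac{\d}{\d t}\Psi_\cvel(\tau_{ij})=\psi_{ij}\dot\tau_{ij}.
\]
The velocity $v_i$ is $C^1$ by \eqref{eq:CS1D}, and $\cvel-\varepsilon_{ij}v_i>0$, so the logarithm is differentiable. Its derivative is
\[
-\frac{\d}{\d t}\ln\Bigl(\frac{\cvel-\varepsilon_{ij}v_i}{\cvel}\Bigr)=\frac{\varepsilon_{ij}\dot v_i}{\cvel-\varepsilon_{ij}v_i}=\frac{\varepsilon_{ij}\psi_{ij}\bigl(v_j(r_{ij})-v_i\bigr)}{\cvel-\varepsilon_{ij}v_i}.
\]
Adding the two contributions gives
\[
\varepsilon_{ij}\psi_{ij}\bigl(v_i-v_j(r_{ij})\bigr)\Bigl[\frac{1}{\cvel-\varepsilon_{ij}v_j(r_{ij})}-\frac{1}{\cvel-\varepsilon_{ij}v_i}\Bigr].
\]
The bracket equals $\varepsilon_{ij}\bigl(v_j(r_{ij})-v_i\bigr)/\bigl[(\cvel-\varepsilon_{ij}v_i)(\cvel-\varepsilon_{ij}v_j(r_{ij}))\bigr]$. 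Using $\varepsilon_{ij}^2=1$, this yields exactly \eqref{eq:doppler-dissipation}.

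Nonpositivity follows because $\psi_{ij}>0$ and both denominators are at least $\cvel-\svel>0$. For $v_j(r_{ij})$ this holds whether $r_{ij}(t)$ lies in the initial history or not, by \eqref{cond:sublum} and \eqref{eq:subluminal}.

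The main technical point is Step 2: the $C^1$ regularity of $\tau_{ij}$ when $r_{ij}(t)$ falls in the history interval $[-S_0,0]$, including the crossing of $r_{ij}(t)=0$. Compatibility \eqref{IC:comp} makes $x_j$ globally $C^1$ with continuous derivative $v_j$, and this suffices for the implicit function argument. The algebra in Step 3 is routine.
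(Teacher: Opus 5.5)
Your proof is correct and follows essentially the same route as the paper: you differentiate the signed light-cone relation to get $(\cvel-\varepsilon_{ij}v_j(r_{ij}))\dot\tau_{ij}=\varepsilon_{ij}\bigl(v_i-v_j(r_{ij})\bigr)$, which is the paper's identity $p\dot\tau_{ij}=p-q$, and then combine it with $\dot v_i$ from \eqref{eq:CS1D} to obtain the negative quadratic form. Your implicit-function-theorem argument for $\tau_{ij}\in C^1(I)$, which uses compatibility to make $x_j$ globally $C^1$ and the bound $\cvel-\svel>0$ on the $\tau$-derivative, supplies a detail that the paper only asserts.
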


\begin{proof}
Since $| x_i(t)-x_j(r_{ij}(t))| = \cvel\tau_{ij}(t) >0$ for all $t\in I$, the sign $\varepsilon_{ij}$ is obviously constant on $I$.
Also note that $\tau_{ij}$ is continuously differentiable on $I$.

Denote
\[
 \ p(t):=\cvel-\varepsilon_{ij}v_j(r_{ij}(t)),\qquad   q(t):=\cvel-\varepsilon_{ij}v_i(t).
\]
Then \eqref{eq:CS1D} gives $\dot q = \psi_{ij} (p-q)$.
Differentiating \eqref{def:tauij} on $I$, using the compatibility condition \eqref{IC:comp} if $r_{ij}\leq 0$, gives
\[
 \cvel\dot\tau_{ij} = \varepsilon_{ij}\bigl(v_i-v_j(r_{ij})(1-\dot\tau_{ij})\bigr) = p - q + (\cvel -p) \dot\tau_{ij},
\]
and hence $p\dot\tau_{ij} =p-q$.
The chain rule applied to \eqref{def:Psic} yields
\[
    \frac{\d}{\d t}\Psi_{\cvel}(\tau_{ij})= \psi(\cvel \tau_{ij}) \dot\tau_{ij} = \psi_{ij}\frac{p-q}{p}.
\]
Consequently,
\[
   \frac{\d}{\d t}\JJ_{ij} = \psi_{ij}\frac{p-q}{p} - \frac{\dot q}{q} = - \psi_{ij} \frac{(p-q)^2}{pq},
\]
which is \eqref{eq:doppler-dissipation}.
\end{proof}

\begin{corollary}\label{corr:delay-bound}
Along the solutions of \eqref{eq:CS1D}, we have
\begin{equation}\label{eq:potential-bound}
 \Psi_{\cvel}(\tau_{ij}(t))\leq\Psi_{\cvel}(\tau_{ij}(0)) + \ln \frac{\cvel+\svel}{\cvel-\svel} \qquad\mbox{for all } t\geq0,
\end{equation}
In particular, if \eqref{cond:flocking} holds, we have the uniform bound
\begin{equation}\label{eq:delay-bound-constant}
    \tau_{ij}(t) \leq \Psi_{\cvel}^{-1} \left( \Psi_{\cvel}(\tau_{ij}(0)) + \ln\frac{\cvel+\svel}{\cvel-\svel} \right) \qquad\mbox{for all } t\geq 0.
\end{equation}
\end{corollary}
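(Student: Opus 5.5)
The plan is to integrate the dissipation identity of Lemma~\ref{lem:doppler} on each connected component of $\{\tau_{ij}>0\}$, and then control the logarithmic term by subluminality. Fix $t\geq0$ with $\tau_{ij}(t)>0$; if $\tau_{ij}(t)=0$ there is nothing to prove, since $\Psi_\cvel(0)=0$ and the right-hand side of \eqref{eq:potential-bound} is nonnegative. Let $I$ be the component of $\{\tau_{ij}>0\}$ containing $t$, and let $a:=\inf I\geq 0$. Two cases arise: either $a=0$ and $0\in I$ (so $\tau_{ij}(0)>0$), or $\tau_{ij}(a)=0$ by continuity of $\tau_{ij}$. The continuity of $\tau_{ij}$ comes from unique solvability of \eqref{def:tauij} and continuity of the trajectories.

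On $I$, Lemma~\ref{lem:doppler} shows that $\JJ_{ij}$ is nonincreasing. For $s\in I$ with $s<t$ this gives
\[
\Psi_\cvel(\tau_{ij}(t)) \leq \Psi_\cvel(\tau_{ij}(s)) + \ln\frac{\cvel-\varepsilon_{ij}v_i(t)}{\cvel} - \ln\frac{\cvel-\varepsilon_{ij}v_i(s)}{\cvel}.
\]
By \eqref{eq:subluminal}, $\cvel-\svel\leq \cvel-\varepsilon_{ij}v_i\leq \cvel+\svel$. The difference of logarithms is therefore at most $\ln\frac{\cvel+\svel}{\cvel-\svel}$, uniformly in $s$ and $t$.

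We now let $s\downarrow a$. In the first case we take $s=0$ directly. In the second case $\Psi_\cvel(\tau_{ij}(s))\to\Psi_\cvel(0)=0\leq \Psi_\cvel(\tau_{ij}(0))$ by continuity of $\tau_{ij}$ and of $\Psi_\cvel$. Because the bound on the log term does not depend on $s$, passing to the limit causes no difficulty, and in both cases we obtain \eqref{eq:potential-bound}. The only delicate point, which I regard as the main (mild) obstacle, is the fact that $\JJ_{ij}$ is defined only on $I$, with a sign $\varepsilon_{ij}$ that may change between components. This is exactly why each component is treated separately, and why we rely on the crude $s$-independent bound for the logarithm rather than on continuity of $\JJ_{ij}$ across zeros of $\tau_{ij}$.

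For \eqref{eq:delay-bound-constant}, note that $\psi>0$, so $\Psi_\cvel$ is continuous and strictly increasing on $[0,\infty)$. Under \eqref{cond:flocking}, $\Psi_\cvel(\rho)=\frac1\cvel\int_0^{\cvel\rho}\psi\to\infty$ as $\rho\to\infty$. Hence $\Psi_\cvel$ is a bijection of $[0,\infty)$ onto itself with a strictly increasing inverse, and applying $\Psi_\cvel^{-1}$ to \eqref{eq:potential-bound} yields the uniform bound.
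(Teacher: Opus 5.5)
Your proposal is correct and follows essentially the same route as the paper. The paper also integrates the dissipation identity of Lemma~\ref{lem:doppler} over the component of $\{\tau_{ij}>0\}$ containing $t$, uses $\tau_{ij}(s_I)=0$ at a positive left endpoint, and bounds the logarithmic term by $\ln\frac{\cvel+\svel}{\cvel-\svel}$. Your limiting argument $s\downarrow a$ and your explicit check that $\Psi_\cvel$ maps $[0,\infty)$ onto itself are slightly more careful versions of steps the paper leaves implicit.
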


\begin{proof}
If $t\geq 0$ is such that $\tau_{ij}(t)=0$, then the left-hand side in \eqref{eq:potential-bound} vanishes and the inequality holds trivially.
It also holds trivially for $t=0$.

Let us therefore fix $t > 0$ such that $\tau_{ij}(t)>0$ and let $I$ be the maximal connected component of the set $\{s\geq0:\tau_{ij}(s)>0\}$ such that $t\in I$.
Let $s_I\geq0$ be the left endpoint of the closure of $I$. Integrating \eqref{eq:doppler-dissipation} over the interval $(s_I, t)$ gives
\begin{align*}
 \Psi_{\cvel}(\tau_{ij}(t))
 &\leq\Psi_{\cvel}(\tau_{ij}(s_I))
 +\ln\frac{\cvel-\varepsilon_{ij}v_i(t)}{\cvel-\varepsilon_{ij}v_i(s_I)}\nonumber\\
 &\leq\Psi_{\cvel}(\tau_{ij}(0))+ \ln \frac{\cvel+\svel}{\cvel-\svel},
\end{align*}
where we used that if $s_I>0$, then by definition $\tau_{ij}(s_I)=0$, and $\Psi_{\cvel}(0) = 0$.
Consequently, we have $\Psi_{\cvel}(\tau_{ij}(s_I)) \leq \Psi_{\cvel}(\tau_{ij}(0))$ and \eqref{eq:potential-bound} follows.

Due to the global positivity of $\psi$ and \eqref{cond:flocking}, the function $\Psi_{\cvel}$ is strictly increasing on $[0,\infty)$.
Consequently, its inverse is well defined and \eqref{eq:delay-bound-constant} follows directly from \eqref{eq:potential-bound}.
\end{proof}

\begin{remark}
The form of the functional \eqref{def:J1D} is motivated by the classical Doppler effect.
On an interval where the signal orientation $\varepsilon_{ij}$ is constant, differentiating the light-cone relation \eqref{def:tauij} gives
\begin{equation*}
   \dot r_{ij}(t) =\frac{\mathfrak c-\varepsilon_{ij}v_i(t)} {\mathfrak c-\varepsilon_{ij}v_j(r_{ij}(t))}
   =\left(1-\frac{\varepsilon_{ij}v_i(t)}{\cvel}\right) \left(1-\frac{\varepsilon_{ij}v_j(r_{ij}(t))}{\cvel}\right)^{-1}.
\end{equation*}
If agent $j$ emits a signal with phase $\theta_j(t)$, its phase received by $i$ is $\theta_j(r_{ij}(t))$.
Therefore, the received and emitted instantaneous frequencies satisfy $\nu_i(t)=\dot r_{ij}(t)\nu_j(r_{ij}(t))$.
Thus $\dot r_{ij}(t) $ is the Doppler frequency ratio in the common time coordinate of the model.
Its first factor accounts for the motion of the receiver through the incoming wavefronts:
if their local spacing is $\mu$, their crossing rate at the receiver is $(\mathfrak c-\varepsilon_{ij}v_i)/\mu$,
compared with $\mathfrak c/\mu$ at a stationary receiver. Their ratio is therefore $1-\varepsilon_{ij}v_i/\mathfrak c$.
Similarly, the second factor accounts for the modification of wavefront spacing caused by the source motion;
 see \cite[Sec.~17.7, Eqs.~(17.19)--(17.20)]{MoebsLingSanny2016}.
 Therefore, the logarithmic term in \eqref{def:J1D}
is the negative logarithm of the receiver factor. The derivative can also be written as
\begin{equation*}
    \frac{\d}{\d t}\JJ_{ij}(t) =-\psi_{ij}(t)\frac{(\dot r_{ij}(t)-1)^2}{\dot r_{ij}(t)}. 
\end{equation*}
\end{remark}

\section{Two agents in multiple spatial dimensions}
\label{sec:multid-flocking}

In this section we extend the result of Section~\ref{sec:1D} to
multiple spatial dimensions $d\geq 2$. We retain the subluminality,
compatibility and fat-tail assumptions \eqref{cond:sublum}, \eqref{IC:comp} and \eqref{cond:flocking},
and, in addition, assume that all values of the initial velocity histories lie
on one common affine line. Choose a unit vector $e\in\R^d$ and
write this line as $w+\R e$, where $w\cdot e=0$.
Thus, with $X_i:=x_i^0(0)$, the initial histories have the form
\begin{equation}\label{eq:multid-history}
\begin{aligned}
    v_i^0(t)&=w+u_i^0(t)e,\\
    x_i^0(t)&=X_i+tw+e\int_0^t u_i^0(s)\,\d s,
\end{aligned}
\qquad t\in[-S_0,0],\quad i=1,2,
\end{equation}
where $u_i^0$ are continuous scalar functions and $S_0$ is the
relevant history length specified in Proposition~\ref{thm:ex}.
The position histories in \eqref{eq:multid-history} follow from
\eqref{IC:comp}. 

Define
\begin{equation*}
    P_\perp:=I-e\otimes e,
    \qquad
    u_i(t):=e\cdot v_i(t),
\end{equation*}
and put
\begin{equation}\label{eq:CU}
    C:=\sqrt{\cvel^2-|w|^2},
    \qquad
    U:=\max_{i=1,2}\max_{s\in[-S_0,0]}|u_i^0(s)|.
\end{equation}
We first show that the common affine velocity line is preserved.

\begin{lemma}\label{lem:multid-invariance}
For every $t\in[-S_0,\infty)$ and $i=1,2$, we have
\begin{equation}\label{eq:multid-affine-line}
    v_i(t)=w+u_i(t)e,
    \qquad
    v_1(t)-v_2(t)=(u_1(t)-u_2(t))e.
\end{equation}
Moreover, $|u_i(t)|\leq U<C$ on this time interval.
In particular,
\begin{equation*}
    \dv(t)=|u_1(t)-u_2(t)|,
    \qquad t\geq0.
\end{equation*}
\end{lemma}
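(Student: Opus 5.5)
The plan is to deduce the invariance of the affine line $w+\R e$ from the nonexpansivity of the velocity convex hull (Lemma~\ref{lem:nonexp}), and not from a direct ODE argument. For $t\in[-S_0,0]$ the claim is exactly the hypothesis \eqref{eq:multid-history}, with $u_i(t)=e\cdot v_i^0(t)=u_i^0(t)$, since $w\cdot e=0$. For $t\geq0$ I will first note that $\HH(0)$ consists of velocity values $v_j(s)$ with $s\in[r_{ij}(0),0]$. By \cite[Theorem 1]{H:KRM}, we have $r_{ij}(0)\geq -S_0$, and the same follows from \eqref{eq:delay-distance}, which gives $\tau_{ij}(0)\leq \dx(0)/(\cvel-\svel)=S_0$. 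Hence $\HH(0)\subseteq\{w+se:\ |s|\leq U\}$. This set is a closed segment and therefore convex, so $\KK(0)$ is contained in it.

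Lemma~\ref{lem:nonexp}, applied with $t_0=0$, gives \eqref{eq:future-velocities-in-K}, that is, $v_i(t)\in\KK(0)$ for all $t\geq0$. Consequently $v_i(t)=w+u_i(t)e$ with $|u_i(t)|\leq U$, where $u_i(t)=e\cdot v_i(t)$ again because $w\cdot e=0$. The identity $v_1-v_2=(u_1-u_2)e$ then follows immediately, and so does $\dv(t)=|v_1(t)-v_2(t)|=|u_1(t)-u_2(t)|$, since $|e|=1$.

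The remaining point, and the only one requiring care, is the strict inequality $U<C$. Because the initial velocity values are orthogonal decompositions, they satisfy $|v_i^0(s)|^2=|w|^2+u_i^0(s)^2\leq\svel^2<\cvel^2$. The function $u_i^0$ is continuous on the compact interval $[-S_0,0]$, so the maximum defining $U$ is attained at some $s^*$. This yields $U^2\leq\svel^2-|w|^2<\cvel^2-|w|^2=C^2$, and in particular $|w|\leq\svel<\cvel$, so $C$ is well defined and positive.

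I expect no serious obstacle. The one step to be careful about is the justification that $\HH(0)$ only involves history values on $[-S_0,0]$, where the affine-line hypothesis is imposed. I will settle this with the delay bound \eqref{eq:delay-distance} at $t=0$.
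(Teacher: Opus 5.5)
Your proposal is correct and follows essentially the same route as the paper. The segment $\{w+se;\ |s|\leq U\}$ contains $\KK(0)$, Lemma~\ref{lem:nonexp} keeps every $v_i(t)$, $t\geq0$, inside $\KK(0)$, and the orthogonal decomposition $|w|^2+U^2\leq\svel^2$ gives $U<C$. Your explicit check via \eqref{eq:delay-distance} that $\HH(0)$ only samples the history on $[-S_0,0]$ fills in a detail that the paper covers only by the phrase ``all relevant initial velocity values''.
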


\begin{proof}
The assertions about the affine line and the bound $|u_i|\leq U$
hold on the initial interval by \eqref{eq:multid-history}.
Since the segment
\begin{equation*}
    \mathcal S:=\{w+ue;\; |u|\leq U\}
\end{equation*}
is convex and contains all relevant initial velocity values, the definition
\eqref{def:HKL} gives $\KK(0)\subseteq\mathcal S$.
For $t\geq0$, Lemma~\ref{lem:nonexp} therefore implies
\begin{equation*}
    v_i(t)\in\HH(t)\subseteq\KK(t)
    \subseteq\KK(0)\subseteq\mathcal S.
\end{equation*}
Consequently, $P_\perp v_i(t)=w$ and $|u_i(t)|\leq U$,
which proves \eqref{eq:multid-affine-line} and the formula for $\dv$.

Finally, orthogonality of $w$ and $e$ gives
\[
    |w|^2+U^2
    =\max_{i=1,2}\max_{s\in[-S_0,0]}|v_i^0(s)|^2
    \leq\svel^2.
\]
Thus $C^2-U^2  =\cvel^2-|w|^2-U^2  \geq\cvel^2-\svel^2>0$,
and hence $U<C$.
\end{proof}

The compatibility condition and Lemma~\ref{lem:multid-invariance}
give, for every $t\in[-S_0,\infty)$,
\begin{equation*}
    P_\perp x_i(t)
    =
    P_\perp X_i+\int_0^t P_\perp v_i(s)\,\d s
    =
    P_\perp X_i+tw.
\end{equation*}
Consequently, defining $y_i(t):=e\cdot x_i(t)$, we obtain
\begin{equation*}
    x_i(t)= (e\cdot x_i(t))e+P_\perp x_i(t) = y_i(t)e+P_\perp X_i+tw,
    \qquad t\in[-S_0,\infty).
\end{equation*}
Projecting \eqref{eq:CS1}--\eqref{eq:CS2} onto $e$ yields, for $t\geq0$,
\begin{equation}\label{eq:multid-scalar-system}
    \dot y_i(t)=u_i(t),
    \qquad
    \dot u_i(t)
    =
    \psi_{ij}(t)\bigl(u_j(r_{ij}(t))-u_i(t)\bigr),
\end{equation}
where $j\neq i$ and $\psi_{ij}(t)$ given by \eqref{def:psiij}.
The light-cone relation \eqref{def:tauij} becomes
\begin{equation}\label{eq:multid-projected-cone}
    \cvel^2\tau_{ij}(t)^2
    =
    \bigl(y_j(r_{ij}(t))-y_i(t)\bigr)^2
    +
    |z_{ij}+w\tau_{ij}(t)|^2,
\end{equation}
where $z_{ij}:=P_\perp(X_i-X_j)$.

We introduce $\rho_{ij}\geq 0$ as the nonnegative root of the equation
\begin{equation}\label{eq:root}
  \mathfrak c^2\rho_{ij}^2 = |z_{ij}+w\rho_{ij}|^2.
\end{equation}
This leads to
\begin{equation} \label{def:rhoKij}
    \rho_{ij}:=\frac{z_{ij}\cdot w + K_{ij}}{C^2} \qquad\mbox{with}\quad
    K_{ij}:=\sqrt{(z_{ij}\cdot w)^2+C^2|z_{ij}|^2}. 
\end{equation}
By equation \eqref{eq:multid-projected-cone} we readily have
\begin{equation}\label{eq:multid-longitudinal-separation}
    \tau_{ij}(t)\geq\rho_{ij}\qquad \mbox{and}\qquad |y_j(r_{ij}(t))- y_i(t)|=R_{ij}(\tau_{ij}(t)),
\end{equation}
with
\begin{equation*}   
   R_{ij}(\sigma):=\sqrt{C^2 \sigma^2-2 (z_{ij}\cdot w) \sigma  - |z_{ij}|^2}\qquad \mbox{for } \sigma \geq\rho_{ij}.
\end{equation*}
Also note that by \eqref{eq:root}, we have $R_{ij}(\rho_{ij}) = 0$.

Define, for $\sigma>\rho_{ij}$,
\begin{equation*}   
 k_{ij}(\sigma):=R_{ij}'(\sigma)=\frac{C^2\sigma - z_{ij}\cdot w}{R_{ij}(\sigma)}.
\end{equation*}
Then, again by direct calculation,
\begin{equation}\label{eq:multid-k-identities}
  k_{ij}^2=C^2+\frac{K_{ij}^2}{R_{ij}^2},\qquad  
    \int_{\rho_{ij}}^\infty(k_{ij}(s)-C)\,\mathrm ds = \int_{\rho_{ij}}^\infty(R_{ij}(s)-Cs)'\,\mathrm ds =\frac{K_{ij}}{C},
\end{equation}
where for the evaluation of the integral we used 
$R_{ij}(\rho_{ij})=0$ and $R_{ij}(\sigma)-C\sigma\to- (z_{ij}\cdot w)/C$ as $\sigma\to\infty$.

\subsection*{A modified Lyapunov functional}

We now introduce a modified version of the functional $\JJ_{ij}$ defined in \eqref{def:J1D}. Our strategy is to reduce the dynamics to the one-dimensional setting of Section~\ref{sec:1D} and to adapt the functional \eqref{def:J1D} accordingly. In particular, the propagation speed $\mathfrak c$ in the proof of Lemma~\ref{lem:doppler} is replaced by $k_{ij}(\tau_{ij})$.
In the first step, we compensate for this by replacing the constant $\mathfrak c$ in the logarithmic term of \eqref{def:J1D} by the function $Q_{ij}$ of the delay,
constructed in the following lemma.

\begin{lemma}\label{lem:multid-correction}
Assume that \eqref{cond:flocking} holds.
For $\sigma\geq\rho_{ij}$ define
\begin{equation}\label{eq:multid-correction}
    Q_{ij}(\sigma) :=  \int_\sigma^\infty \psi(\cvel s)k_{ij}(s)  \exp\bigl(\Psi_{\cvel}(\sigma)-\Psi_{\cvel}(s)\bigr)\,\mathrm ds, 
\end{equation}
with $\Psi_{\cvel}$ given by \eqref{def:Psic}.
Then $Q_{ij}$ is finite and continuous on $[\rho_{ij},\infty)$, locally Lipschitz on $(\rho_{ij},\infty)$, and satisfies
\begin{equation}\label{eq:multid-correction-bounds}
 C\leq Q_{ij}(\sigma)\leq C+\frac{K_{ij}}{C} \qquad\mbox{for all } \sigma\geq \rho_{ij},
\end{equation}
with the constants $C$ given by \eqref{eq:CU} and $K_{ij}$, $\rho_{ij}$ given by \eqref{def:rhoKij}.
Moreover,
\begin{equation}\label{eq:multid-correction-ode}
    Q_{ij}'(\sigma)=\psi(\cvel \sigma)\bigl(Q_{ij}(\sigma)-k_{ij}(\sigma)\bigr) \qquad\mbox{for all } \sigma > \rho_{ij}.
\end{equation}
\end{lemma}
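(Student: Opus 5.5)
The plan is to treat \eqref{eq:multid-correction} as a Laplace-type average of $k_{ij}$ with respect to the probability-like weight $\psi(\cvel s)e^{\Psi_{\cvel}(\sigma)-\Psi_{\cvel}(s)}\,\mathrm ds$ on $[\sigma,\infty)$. Since $\Psi_{\cvel}'(s)=\psi(\cvel s)$ and $\Psi_{\cvel}(s)\to\infty$ by \eqref{cond:flocking}, we have
\[
\int_\sigma^\infty \psi(\cvel s)e^{\Psi_{\cvel}(\sigma)-\Psi_{\cvel}(s)}\,\mathrm ds = \Bigl[-e^{\Psi_{\cvel}(\sigma)-\Psi_{\cvel}(s)}\Bigr]_{s=\sigma}^{s=\infty}=1 .
\]
This is the only place where \eqref{cond:flocking} enters: without it the total mass would be $1-e^{\Psi_{\cvel}(\sigma)-\Psi_{\cvel}(\infty)}<1$. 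Writing $Q_{ij}(\sigma)=C+\int_\sigma^\infty \psi(\cvel s)(k_{ij}(s)-C)e^{\Psi_{\cvel}(\sigma)-\Psi_{\cvel}(s)}\,\mathrm ds$ reduces everything to controlling the excess $k_{ij}-C$.

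For the bounds \eqref{eq:multid-correction-bounds}, I will use the first identity in \eqref{eq:multid-k-identities}, which gives $k_{ij}\geq C$ on $(\rho_{ij},\infty)$. Note that $k_{ij}>0$ there, because $C^2\sigma-z_{ij}\cdot w>0$ for $\sigma>\rho_{ij}$, which follows from the root formula. Hence the excess integral is nonnegative and $Q_{ij}\geq C$. For the upper bound I will estimate the weight by $\psi(\cvel s)e^{\Psi_{\cvel}(\sigma)-\Psi_{\cvel}(s)}\leq\psi(0)=1$, using that $\psi$ is nonincreasing with $\psi(0)=1$ and that $\Psi_{\cvel}$ is increasing. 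This gives
\[
Q_{ij}(\sigma)-C\leq\int_\sigma^\infty(k_{ij}(s)-C)\,\mathrm ds\leq\int_{\rho_{ij}}^\infty(k_{ij}(s)-C)\,\mathrm ds=\frac{K_{ij}}{C}
\]
by the second identity in \eqref{eq:multid-k-identities}. The same estimate shows that the integral converges absolutely, so $Q_{ij}$ is finite. One point needs checking: near $\rho_{ij}$ we have $k_{ij}\sim K_{ij}/R_{ij}$, and $R_{ij}(s)\sim\sqrt{2K_{ij}(s-\rho_{ij})}$ because $R_{ij}^2$ has a simple root at $\rho_{ij}$. So $k_{ij}$ has an integrable $(s-\rho_{ij})^{-1/2}$ singularity, consistent with the finite integral in \eqref{eq:multid-k-identities}. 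The degenerate case $z_{ij}=0$ gives $K_{ij}=0$ and $k_{ij}\equiv C$, so $Q_{ij}\equiv C$.

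For regularity and the ODE, I will factor $Q_{ij}(\sigma)=e^{\Psi_{\cvel}(\sigma)}G(\sigma)$ with
\[
G(\sigma):=\int_\sigma^\infty\psi(\cvel s)k_{ij}(s)e^{-\Psi_{\cvel}(s)}\,\mathrm ds .
\]
The integrand of $G$ is in $L^1(\rho_{ij},\infty)$ by the previous paragraph, so $G$ is absolutely continuous on $[\rho_{ij},\infty)$ and hence continuous there, including at $\rho_{ij}$. On $(\rho_{ij},\infty)$ the integrand is continuous, since $k_{ij}$ is smooth away from $\rho_{ij}$. Therefore $G$ is $C^1$ there with $G'(\sigma)=-\psi(\cvel\sigma)k_{ij}(\sigma)e^{-\Psi_{\cvel}(\sigma)}$, and $e^{\Psi_{\cvel}}$ is $C^1$ with derivative $\psi(\cvel\sigma)e^{\Psi_{\cvel}}$. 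The product rule then gives
\[
Q_{ij}'(\sigma)=\psi(\cvel\sigma)Q_{ij}(\sigma)-\psi(\cvel\sigma)k_{ij}(\sigma),
\]
which is \eqref{eq:multid-correction-ode}. Local Lipschitz continuity on $(\rho_{ij},\infty)$ follows because $Q_{ij}'$ is continuous there, hence bounded on compacts. The main obstacle is justifying finiteness and continuity up to the endpoint $\rho_{ij}$, where $k_{ij}$ blows up. The integrability argument above, via \eqref{eq:multid-k-identities}, settles this.
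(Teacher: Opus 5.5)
Your proposal is correct and follows the paper's proof: normalization of the weight via the fat-tail condition, $k_{ij}\geq C$ for the lower bound, the bound weight $\leq 1$ combined with the integral identity in \eqref{eq:multid-k-identities} for the upper bound, and differentiation for \eqref{eq:multid-correction-ode}. You also make explicit several points the paper leaves implicit: the positivity of $k_{ij}$ needed to pass from $k_{ij}^2\geq C^2$ to $k_{ij}\geq C$, the integrability of the $(s-\rho_{ij})^{-1/2}$ singularity, and the factorization $Q_{ij}=e^{\Psi_{\cvel}}G$, which justifies both continuity at $\rho_{ij}$ and the differentiation.
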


\begin{proof}
Assumption \eqref{cond:flocking} gives $\Psi_{\cvel}(\infty)=\infty$ and, therefore,
\[
   \int_\sigma^\infty \psi(\cvel s)\exp\bigl( -\Psi_{\cvel}(s) \bigr)\,\mathrm ds =
   \int_{\Psi_\cvel(\sigma)}^\infty e^{-s} \, \d s = e^{-\Psi_\cvel(\sigma)},
\]
where we used $\Psi'_\cvel(\sigma) = \psi(\cvel \sigma)$. Consequently, for all $\sigma\geq\rho_{ij}$ we have
\begin{equation} \label{eq:multid-tail-normalization}
    \int_\sigma^\infty \psi(\cvel s)\exp\bigl(\Psi_{\cvel}(\sigma)-\Psi_{\cvel}(s) \bigr)\,\mathrm ds=1.
\end{equation}

The first identity in \eqref{eq:multid-k-identities} gives $k_{ij} \geq C$, so that with \eqref{eq:multid-tail-normalization} we readily have $Q_{ij}(\sigma) \geq C$.
Moreover, using the global bound $\psi\leq 1$, the monotonicity of $\Psi_{\cvel}$ and \eqref{eq:multid-k-identities},
\begin{equation*}
    Q_{ij}(\sigma)-C \leq \int_\sigma^\infty(k_{ij}(s)-C)\,\mathrm ds  \leq\frac{K_{ij}}{C} \qquad\mbox{for all } \sigma\geq \rho_{ij}.
\end{equation*}
This proves \eqref{eq:multid-correction-bounds}.
Formula \eqref{eq:multid-correction-ode} follows by differentiation of \eqref{eq:multid-correction}.
Finally, on compact intervals contained in $(\rho_{ij},\infty)$ the derivative $Q_{ij}'$ is bounded, proving local Lipschitz continuity of $Q_{ij}$.
\end{proof}

We now establish an analogue of Lemma \ref{lem:doppler} with a modified version of the functional $\JJ_{ij}$ defined in \eqref{def:J1D}.
We note that the modification given by \eqref{eq:multid-functional} below coincides with \eqref{def:J1D} in the one-dimensional setting,
where we have $P_\perp=0$, so that $w=z_{ij}=0$, and, in turn, $Q_{ij} \equiv C \equiv \cvel$.
Consequently, \eqref{def:J1D} is a special case of \eqref{eq:multid-functional} and
this justifies using the same notation $\JJ_{ij}$ for both.

\begin{lemma}\label{lem:multid-doppler}
Fix a connected component $I$ of the set $\{t\geq0:y_i(t)-y_j(r_{ij}(t))\ne0\}$. 
Then the sign $\varepsilon_{ij}:=\operatorname{sgn}(y_i(t)-y_j(r_{ij}(t)))$ is constant on $I$.
For $t\in I$ define
\begin{equation}\label{eq:multid-functional}
   \JJ_{ij}(t):=\Psi_{\cvel}(\tau_{ij}(t))  -\ln\left(\frac{Q_{ij}(\tau_{ij}(t))-\varepsilon_{ij}u_i(t)}{C}\right),
\end{equation}
with $Q_{ij}$ defined in \eqref{eq:multid-correction} and $C$ given by \eqref{eq:CU}.
Then for all $t\in I$ we have
\begin{equation}\label{eq:multid-dissipation}
    \frac{\d}{\d t}\JJ_{ij}(t)  = -\psi_{ij}(t)\frac{|u_i(t)-u_j(r_{ij}(t))|^2}  {(k_{ij}(\tau_{ij}(t))-\varepsilon_{ij}u_j(r_{ij}(t)))  (Q_{ij}(\tau_{ij}(t))-\varepsilon_{ij}u_i(t))}\leq 0.
\end{equation}
\end{lemma}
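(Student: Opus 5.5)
The plan is to repeat the computation in the proof of Lemma~\ref{lem:doppler}, with two changes: the projected light-cone relation \eqref{eq:multid-longitudinal-separation} replaces \eqref{def:tauij}, and the ODE \eqref{eq:multid-correction-ode} for $Q_{ij}$ absorbs the mismatch between $k_{ij}$ and $Q_{ij}$.

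First, the preliminaries. On $I$ the quantity $y_i(t)-y_j(r_{ij}(t))$ is continuous and nonzero, so $\varepsilon_{ij}$ is constant on $I$. By \eqref{eq:multid-longitudinal-separation}, $R_{ij}(\tau_{ij}(t))=|y_i(t)-y_j(r_{ij}(t))|>0$, and since $R_{ij}$ vanishes only at $\rho_{ij}$ on $[\rho_{ij},\infty)$, we get $\tau_{ij}(t)>\rho_{ij}$ on $I$. Hence $k_{ij}(\tau_{ij})$ is well defined, and $Q_{ij}$ is differentiable at $\tau_{ij}(t)$ with derivative given by \eqref{eq:multid-correction-ode}; the integrand in \eqref{eq:multid-correction} is continuous, so $Q_{ij}$ is in fact $C^1$ on $(\rho_{ij},\infty)$. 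Differentiability of $\tau_{ij}$ on $I$ follows, as in Section~\ref{sec:1D}, from the implicit function theorem applied to \eqref{def:tauij}, using the compatibility condition \eqref{IC:comp} when $r_{ij}\leq0$. We also record that $\psi_{ij}(t)=\psi(\cvel\tau_{ij}(t))$, by \eqref{def:tauij}.

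Second, the key identity. Set
\[
 p:=k_{ij}(\tau_{ij})-\varepsilon_{ij}u_j(r_{ij}),\qquad a:=k_{ij}(\tau_{ij})-\varepsilon_{ij}u_i,\qquad \tilde q:=Q_{ij}(\tau_{ij})-\varepsilon_{ij}u_i=Q_{ij}(\tau_{ij})-k_{ij}(\tau_{ij})+a.
\]
Differentiating $R_{ij}(\tau_{ij})=\varepsilon_{ij}(y_i-y_j(r_{ij}))$ gives $k_{ij}\dot\tau_{ij}=\varepsilon_{ij}\bigl(u_i-u_j(r_{ij})(1-\dot\tau_{ij})\bigr)$, that is, $p\,\dot\tau_{ij}=p-a$. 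This is the analogue of $p\dot\tau_{ij}=p-q$ in Lemma~\ref{lem:doppler}, with $\cvel$ replaced by $k_{ij}(\tau_{ij})$. From \eqref{eq:multid-scalar-system} we get $\varepsilon_{ij}\dot u_i=\psi_{ij}(a-p)$. Combining this with \eqref{eq:multid-correction-ode},
\[
 \dot{\tilde q}=\psi_{ij}\Bigl[(Q_{ij}-k_{ij})\frac{p-a}{p}+(p-a)\Bigr]=\psi_{ij}\frac{(p-a)(Q_{ij}-k_{ij}+p)}{p}.
\]
Therefore
\[
\frac{\d}{\d t}\JJ_{ij}=\psi_{ij}\frac{p-a}{p}-\frac{\dot{\tilde q}}{\tilde q}=\psi_{ij}\frac{(p-a)\bigl(\tilde q-Q_{ij}+k_{ij}-p\bigr)}{p\tilde q}=-\psi_{ij}\frac{(p-a)^2}{p\tilde q}.
\]
Since $(p-a)^2=|u_i-u_j(r_{ij})|^2$, this is exactly \eqref{eq:multid-dissipation}. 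The point of the construction is visible here: the term $Q_{ij}-k_{ij}$ generated by the delay dependence of $Q_{ij}$ cancels precisely against the mismatch between $\tilde q$ and $a$, so the dissipation comes out as a perfect square.

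Third, the sign and well-definedness. We need $p>0$ and $\tilde q>0$. By \eqref{eq:multid-k-identities}, $k_{ij}\geq C$; by \eqref{eq:multid-correction-bounds}, $Q_{ij}\geq C$; and by Lemma~\ref{lem:multid-invariance}, $|u_i|,|u_j|\leq U<C$ on $[-S_0,\infty)$, which covers the retarded times as well. Hence both $p$ and $\tilde q$ are bounded below by $C-U>0$. This also guarantees that the logarithm in \eqref{eq:multid-functional} is well defined and that the right-hand side of \eqref{eq:multid-dissipation} is $\leq0$.

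The main technical point I expect is regularity rather than algebra. Specifically, one must justify differentiability of $\tau_{ij}$ and of the chain $Q_{ij}(\tau_{ij}(t))$ on $I$, including the case where $r_{ij}(t)$ falls in the history interval, where $u_j$ is only continuous but $y_j$ is $C^1$ by compatibility. This is handled as in Lemma~\ref{lem:doppler}.
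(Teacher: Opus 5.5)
Your proof is correct and follows the paper's argument essentially line by line. The paper uses the same $p$, and its $q$ is your $\tilde q$. It derives $p\,\dot\tau_{ij}=\varepsilon_{ij}(u_i-u_j(r_{ij}))$ from the signed projected light-cone relation, and it uses \eqref{eq:multid-correction-ode} to get $\dot q=\psi_{ij}\varepsilon_{ij}(u_i-u_j(r_{ij}))\,(Q_{ij}-\varepsilon_{ij}u_j(r_{ij}))/p$. Positivity then comes from $k_{ij},Q_{ij}\geq C>U$. Your auxiliary quantity $a$ is only bookkeeping, and your remarks on the regularity of $\tau_{ij}$ and $Q_{ij}\circ\tau_{ij}$ fill in details the paper leaves implicit.
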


\begin{proof}
The function $\varepsilon_{ij}$ is continuous and nonzero on $I$. Therefore, it is constant on $I$.
Moreover, from \eqref{eq:multid-projected-cone}--\eqref{eq:multid-longitudinal-separation} we have
\[
   \{t\geq0:y_i(t)-y_j(r_{ij}(t))\ne0\} = \{t\geq0:\tau_{ij}(t)>\rho_{ij}\},
\]
and so $\tau_{ij} > \rho_{ij}$ on $I$.
Set
\begin{equation*}
   p:=k_{ij}(\tau_{ij})-\varepsilon_{ij}u_j(r_{ij}),\qquad q:=Q_{ij}(\tau_{ij})-\varepsilon_{ij}u_i.
\end{equation*}
Note that $p$, $q \geq C-U>0$. Differentiating the signed version of \eqref{eq:multid-longitudinal-separation} gives
\begin{equation*}
 k_{ij}(\tau_{ij})\dot\tau_{ij}
 =\varepsilon_{ij}\bigl(u_i-u_j(r_{ij})(1-\dot\tau_{ij})\bigr),
\end{equation*}
and therefore
\begin{equation*} 
 p\dot\tau_{ij}=\varepsilon_{ij}\left( u_i-u_j(r_{ij}) \right).
\end{equation*}
Using \eqref{eq:multid-correction-ode} and \eqref{eq:multid-scalar-system}, we compute
\begin{align*}
    \dot q   &= \psi_{ij}(Q_{ij}-k_{ij})\dot\tau_{ij}+\varepsilon_{ij} \psi_{ij}\left( u_i-u_j(r_{ij}) \right)\\
    &=\frac{\psi_{ij}\varepsilon_{ij}\left( u_i-u_j(r_{ij}) \right)}{p}  \bigl(Q_{ij}-k_{ij}+p\bigr)\\ 
    &=\frac{\psi_{ij}\varepsilon_{ij}\left( u_i-u_j(r_{ij}) \right)}{p}
    \bigl(Q_{ij}-\varepsilon_{ij}u_j(r_{ij})\bigr).
\end{align*}
Consequently,
\begin{align*}
   \frac{\d}{\d t}\JJ_{ij}(t) &= \psi_{ij}\dot\tau_{ij}-\frac{\dot q}{q}\\
      &=\frac{\psi_{ij}\varepsilon_{ij}\left( u_i-u_j(r_{ij}) \right)}{p}   \left(1-\frac{Q_{ij}-\varepsilon_{ij}u_j(r_{ij})}{Q_{ij}-\varepsilon_{ij}u_i}\right) \\
       &=-\frac{\psi_{ij}|\left( u_i-u_j(r_{ij}) \right)|^2}{pq}.
\end{align*}
\end{proof}

\begin{corollary}
Along the solutions of \eqref{eq:multid-scalar-system}--\eqref{eq:multid-projected-cone}, we have
\(   \label{eq:potential-bound-multiD}
   \Psi_{\cvel}(\tau_{ij}(t)) \leq \Psi_{\cvel}(\tau_{ij}(0)) + \ln\frac{Q_{ij}^*+U}{C-U},
\)
where $Q_{ij}^*:=C+\frac{K_{ij}}{C}$, with the constants $C$, $U$ given by \eqref{eq:CU} and $K_{ij}$ given by \eqref{def:rhoKij}.

In particular, if \eqref{cond:flocking} holds, we have the uniform bound
\begin{equation}\label{eq:delay-bound-constant-multiD}
   \tau_{ij}(t) \leq \Psi_{\cvel}^{-1} \left( \Psi_{\cvel}(\tau_{ij}(0)) + \ln\frac{Q_{ij}^*+U}{C-U} \right) \qquad\mbox{for all } t\geq 0.
\end{equation}
\end{corollary}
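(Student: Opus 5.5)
The plan mirrors the proof of Corollary~\ref{corr:delay-bound}, with Lemma~\ref{lem:multid-doppler} in place of Lemma~\ref{lem:doppler}. Fix $t>0$. First suppose $\tau_{ij}(t)=\rho_{ij}$, i.e.\ $t$ lies outside the set $\{s\geq0:\tau_{ij}(s)>\rho_{ij}\}$. We know from \eqref{eq:multid-longitudinal-separation} that $\tau_{ij}(0)\geq\rho_{ij}$. Since $\Psi_{\cvel}$ is nondecreasing, $\Psi_{\cvel}(\tau_{ij}(t))=\Psi_{\cvel}(\rho_{ij})\leq\Psi_{\cvel}(\tau_{ij}(0))$. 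The logarithmic term is nonnegative because $Q_{ij}^*+U\geq C-U$. Hence \eqref{eq:potential-bound-multiD} holds trivially in this case, and it also holds trivially at $t=0$.

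Now suppose $\tau_{ij}(t)>\rho_{ij}$. Let $I$ be the connected component of $\{s\geq0:\tau_{ij}(s)>\rho_{ij}\}$ containing $t$; by the proof of Lemma~\ref{lem:multid-doppler}, this set equals $\{s: y_i(s)-y_j(r_{ij}(s))\neq0\}$. Let $s_I$ be its left endpoint. Integrate \eqref{eq:multid-dissipation} over $(s_I+\delta,t)$ and let $\delta\to0$. This limit uses continuity of $\tau_{ij}$, $u_i$, and of $Q_{ij}$ on $[\rho_{ij},\infty)$ from Lemma~\ref{lem:multid-correction}. On $I$ the sign $\varepsilon_{ij}$ is a constant, so the argument of the logarithm extends continuously to $s_I$. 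The result is
\[
\Psi_{\cvel}(\tau_{ij}(t))\leq \Psi_{\cvel}(\tau_{ij}(s_I))+\ln\frac{Q_{ij}(\tau_{ij}(t))-\varepsilon_{ij}u_i(t)}{Q_{ij}(\tau_{ij}(s_I))-\varepsilon_{ij}u_i(s_I)}.
\]

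For the logarithm, combine \eqref{eq:multid-correction-bounds} with $|u_i|\leq U$ from Lemma~\ref{lem:multid-invariance}. The numerator is at most $Q_{ij}^*+U$, and the denominator is at least $C-U>0$. For the potential term: if $s_I>0$, then $\tau_{ij}(s_I)=\rho_{ij}\leq\tau_{ij}(0)$; if $s_I=0$, the two are equal. In both cases $\Psi_{\cvel}(\tau_{ij}(s_I))\leq\Psi_{\cvel}(\tau_{ij}(0))$, which gives \eqref{eq:potential-bound-multiD}.

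For \eqref{eq:delay-bound-constant-multiD}, note that $\psi>0$ makes $\Psi_{\cvel}$ strictly increasing. Under \eqref{cond:flocking} it is also unbounded, so it is a bijection of $[0,\infty)$ and $\Psi_{\cvel}^{-1}$ can be applied. The only delicate point is justifying the integration up to the boundary, where $k_{ij}$ blows up ($R_{ij}(\rho_{ij})=0$) and $Q_{ij}$ is merely continuous. The fix is to integrate on compact subintervals of $I$, where everything is $C^1$ or locally Lipschitz, and then pass to the limit.
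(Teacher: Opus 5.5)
Your proposal is correct and follows the paper's proof essentially step by step: the trivial cases $t=0$ and $\tau_{ij}(t)=\rho_{ij}$, integration of \eqref{eq:multid-dissipation} from the left endpoint $s_I$ of the component, bounding the logarithm via $C-U\leq Q_{ij}(\tau_{ij})-\varepsilon_{ij}u_i\leq Q_{ij}^*+U$, using $\tau_{ij}(s_I)=\rho_{ij}\leq\tau_{ij}(0)$ when $s_I>0$, and finally inverting $\Psi_{\cvel}$. Your limit over $(s_I+\delta,t)$, which relies on the continuity of $Q_{ij}$ at $\rho_{ij}$, is a small extra piece of rigor that the paper leaves implicit.
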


\begin{proof}
First note that $0 < C-U\leq Q_{ij}(\tau_{ij})-\varepsilon_{ij}u_i\leq Q_{ij}^*+U$, therefore $$\ln\frac{Q_{ij}^*+U}{C-U} \geq 0.$$
Obviously, \eqref{eq:potential-bound-multiD} holds trivially for $t=0$.
If $t > 0$ is such that $\tau_{ij}(t)=\rho_{ij}$, then it holds due to the monotonicity of $\Psi_\cvel$
and the bound $\rho_{ij} \leq \tau_{ij}(0)$ provided by \eqref{eq:multid-longitudinal-separation}.

Let us now fix $t > 0$ such that $\rho_{ij} < \tau_{ij}(t)$ and let $I$ be the maximal connected component of the set $\{s\geq0; \; \tau_{ij}(s)> \rho_{ij} \}$ such that $t\in I$.
Let $s_I\geq0$ be its left endpoint. Integrating \eqref{eq:multid-dissipation} over the interval $(s_I, t)$ gives
\begin{align*}
 \Psi_{\cvel}(\tau_{ij}(t))
 &\leq\Psi_{\cvel}(\tau_{ij}(s_I))
 +\ln\frac{Q_{ij}(\tau_{ij}(t))-\varepsilon_{ij}u_i(t)}
 {Q_{ij}(\tau_{ij}(s_I))-\varepsilon_{ij}u_i(s_I)}\nonumber\\
 &\leq\Psi_{\cvel}(\tau_{ij}(0))+ \ln\frac{Q_{ij}^*+U}{C-U},
\end{align*}
where we used $\tau_{ij}(s_I)=\rho_{ij}\leq\tau_{ij}(0)$ if $s_I>0$. 

Due to the global positivity of $\psi$ and \eqref{cond:flocking}, the function $\Psi_{\cvel}$ is strictly increasing on $[0,\infty)$.
Consequently, its inverse is well defined and \eqref{eq:delay-bound-constant-multiD} follows directly from \eqref{eq:potential-bound-multiD}.
\end{proof}

\section{Multiple agents in one spatial dimension}\label{sec:N1d}
We now discuss an extension of the results obtained in Section \ref{sec:1D} to the case of multiple agents, $N\geq 2$ and $d=1$.

For each agent, consider the maximal open intervals on which it is leftmost.
These intervals form an at most countable family $(I_n)_{n\in\N}$, with corresponding indices $\ell_n\in[N]$ such that
\[
    x_{\ell_n}(t)=\min_{i\in[N]}x_i(t),
    \qquad t\in I_n.
\]
We call changes between distinct leftmost trajectories \emph{switches}.
Whenever the leftmost agent switches from $\ell$ to $k$ with $\ell\neq k$ at a time $t_*$, continuity gives
\[
    x_\ell(t_*)=x_k(t_*)    =   \min_{i\in[N]}x_i(t_*),
\]
and, for all $\varepsilon>0$, there is $t\in(t_*-\varepsilon,t_*+\varepsilon)$ such that $x_\ell(t)\neq x_k(t)$.

\begin{lemma}\label{lem:acc}
The switching times can have at most $N-1$ finite accumulation points in $(0,\infty)$.
\end{lemma}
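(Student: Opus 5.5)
The plan is to show that each finite accumulation point of switching times permanently \emph{merges} two previously distinct trajectories. Since $N$ trajectories can merge at most $N-1$ times before they all coincide, this gives the bound. After all trajectories have merged there is only one distinct leftmost trajectory, so no further switches, and hence no further accumulation points, can occur.

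First step. Let $t_*\in(0,\infty)$ be an accumulation point of switching times. Only finitely many pairs of agents exist, so passing to a subsequence we find a fixed pair $\ell\neq k$ and switching times $t_n\to t_*$ at which the leftmost agent switches between $\ell$ and $k$. In particular $x_\ell(t_n)=x_k(t_n)=\min_i x_i(t_n)$, and $w:=x_\ell-x_k$ is not identically zero on any neighbourhood of $t_*$. By continuity $w(t_*)=0$. Rolle's theorem, applied to $w\in C^1$ between consecutive zeros, gives zeros of $\dot w$ accumulating at $t_*$, so also $\dot w(t_*)=v_\ell(t_*)-v_k(t_*)=0$. Moreover, both agents sit at the minimum at $t_*$. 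By the light-cone relation \eqref{def:tauij}, their delays to every third agent $j$ coincide at $t_*$, and so do the weights $\psi_{\ell j}$ and $\psi_{kj}$. The mutual delays satisfy $\tau_{\ell k}(t)\lesssim |w(t)|$ near $t_*$, by \eqref{eq:delay-distance}.

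Second step. We aim to prove that $w\equiv 0$ on $[t_*,\infty)$. For this we write the equation for $\dot w$ obtained from \eqref{eq:CS2}. The contributions from third agents $j\notin\{\ell,k\}$ differ only through $\tau_{\ell j}-\tau_{kj}$ and $\psi_{\ell j}-\psi_{kj}$. Using Corollary~\ref{cor:retarded-increments} together with the Lipschitz continuity of $\psi$, and the continuity of $v_j$, both differences are bounded by a constant times $|w|$. The mutual terms involve $v_k(r_{\ell k}(t))-v_\ell(t)$ and the symmetric expression. Since $r_{\ell k}(t)\in[t-C|w(t)|,t]$, these terms are controlled by $|\dot w|$ and by the oscillation of the velocities over windows of length $O(|w|)$. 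We then hope to close a Gronwall-type inequality for $m(t):=\sup_{[t_*,t]}(|w|+|\dot w|)$ with $m(t_*)=0$, which would force $m\equiv0$ on a right neighbourhood of $t_*$. Since merged agents satisfy identical equations afterwards, uniqueness (Proposition~\ref{thm:ex}) then propagates $w\equiv0$ to all later times.

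Third step. The counting. Partition $[N]$ into classes of agents whose trajectories coincide on $[t,\infty)$. This partition only coarsens as $t$ increases. By the first two steps, every finite accumulation point strictly reduces the number of classes, so there can be at most $N-1$ such points.

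Main obstacle. The hardest part is the uniqueness argument in the second step. The delayed terms reach back before $t_*$, where $w\neq0$, so a naive Gronwall estimate does not close. I would first try to exploit the smallness of the windows: the mutual delay is $O(|w|)$, so the retarded velocity $v_k(r_{\ell k}(t))$ differs from $v_k(t)$ by $O(|w(t)|)$, using that $\dot v$ is bounded. This would make all error terms linear in $|w|$ and $|\dot w|$ evaluated at the current time only. If that fails, a fallback is to argue instead that near $t_*$ the sign of $w$ is eventually constant. For this we would use that $\ddot w$ has a definite sign once $\dot w(t_*)=0$, contradicting the infinitely many switches unless $w\equiv0$.
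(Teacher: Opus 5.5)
Your overall architecture matches the paper's: each finite accumulation point permanently merges a pair $\ell,k$, and the merges are then counted. Steps 1 and 3 are fine. The gap is Step 2, which you leave as a hope and which, as written, is not justified.

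First, the claim that the third-agent differences are $O(|w|)$ by ``continuity of $v_j$'' is wrong. From \eqref{def:tauij} and the $\svel$-Lipschitz continuity of $x_j$ you do get $(\cvel-\svel)|\tau_{\ell j}(t)-\tau_{kj}(t)|\leq|w(t)|$; this is not contained in Corollary~\ref{cor:retarded-increments}. However, continuity then only yields $|v_j(r_{\ell j}(t))-v_j(r_{kj}(t))|\leq\omega_j(c_0|w(t)|)$ for a modulus of continuity $\omega_j$, and that cannot feed a Gronwall inequality.

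Your ``first try'' does close if you use Lipschitz bounds instead:
\begin{itemize}
\item On $(0,\infty)$ one has $|\dot v_j|\leq 2(N-1)\svel$.
\item By \eqref{eq:delay-distance}, $\tau_{\ell k}(t)\leq|w(t)|/(\cvel-\svel)$, so $r_{\ell k}(t)>0$ near $t_*>0$.
\item These give $|\ddot w|\leq c_0(|w|+|\dot w|)$ near $t_*$, and with $w(t_*)=\dot w(t_*)=0$ this forces $w\equiv0$.
\end{itemize}
This only works when the third-agent retarded times $r_{\ell j}(t),r_{kj}(t)$ are nonnegative. If they fall into $[-S_0,0]$, where $v_j^0$ is merely continuous, the estimate breaks down.

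Your fallback is not viable. At $t_*$ all data entering the equations of $\ell$ and $k$ coincide: zero mutual delays, equal delays to third emitters, and $v_\ell(t_*)=v_k(t_*)$. Hence $\ddot w(t_*)=0$, and there is no definite sign to exploit.

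The missing idea, which makes Step 2 estimate-free, is the one the paper uses. Once $x_\ell(t_*)=x_k(t_*)$ and $v_\ell(t_*)=v_k(t_*)$, the coincident continuation $x_\ell\equiv x_k$, $v_\ell\equiv v_k$ on $[t_*,\infty)$ is self-consistent. Along it, \eqref{def:tauij} gives $\tau_{\ell k}=\tau_{k\ell}=0$, so the mutual interaction terms vanish, and $\tau_{\ell j}=\tau_{kj}$ for every third agent $j$. The two equations therefore have identical right-hand sides, so this continuation is a solution. The uniqueness part of Proposition~\ref{thm:ex} then identifies it with the actual solution.

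Your worry about delayed terms reaching back before $t_*$ disappears on this route. Along the merged continuation, $\ell$ and $k$ use no mutual history at all, and the third-agent histories they see are the same for both. With this replacement, your Step 3 goes through exactly as in the paper: the classes only coarsen, and accumulation must come from the left because $w\equiv0$ after $t_*$.
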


\begin{proof}
Let $(t_n)_{n\in\N}$ be a sequence of distinct genuine switching times
converging to $t_*\in(0,\infty)$. After passing to a subsequence,
the sequence approaches $t_*$ from one side.
Since there are only finitely many pairs of agents, after passing to another subsequence,
every switch occurs between the same two agents $\ell\ne k$. Consequently,
\[
    x_\ell(t_n)=x_k(t_n)=\min_{i\in[N]}x_i(t_n).
\]
Continuity gives $x_\ell(t_*)=x_k(t_*)$, and then the light-cone relation \eqref{def:tauij} implies
\[
    \tau_{\ell k}(t_*) = \tau_{k\ell}(t_*) =0,
    \qquad
    \tau_{\ell j}(t_*) = \tau_{kj}(t_*)    \quad\text{for every }j\notin\{\ell,k\}.
\]
Moreover, writing $\xi:=x_\ell-x_k$, differentiability yields
\[
    v_\ell(t_*)-v_k(t_*) = \dot\xi(t_*)  = \lim_{n\to\infty}  \frac{\xi(t_n)-\xi(t_*)}{t_n-t_*}  = 0.
\]
Consequently, the equations for $v_\ell$ and $v_k$ have identical right-hand
sides at $t_*$. Therefore, the identical continuation
\[
    x_\ell(t)=x_k(t),    \qquad    v_\ell(t)=v_k(t)    \qquad\mbox{for all } t\geq t_*
\]
is a solution of \eqref{eq:CS1}--\eqref{def:tauij},
since for all $t\geq t_*$ the mutual delays and interaction terms between $\ell$ and $k$ vanish,
and delays from the two receivers $\ell$, $k$ to all other emitters agree.
According to Proposition~\ref{thm:ex} this solution is unique.
Thus every finite accumulation point approached from the left coalesces at least two
previously distinct trajectories.
Accumulation at $t_*$ approached from the right is impossible,
because the involved agents coincide for all $t\geq t_*$.
We conclude that with $N$ agents, accumulation of genuine switching times can happen at most $N-1$ times.
\end{proof}

Due to Lemma \ref{lem:acc}, there exists $t_0\geq 0$ such that in $(t_0,\infty)$
either no more switches take place, or there are infinitely many switches with times
accumulating only at infinity.
We start treating the first case, i.e., assume that there exists a fixed agent with index $\ell\in[N]$
which is leftmost forever after $t_0$,
\begin{equation}\label{eq:eventually-leftmost}
    x_\ell(t)=\min_{i\in[N]}x_i(t)    \qquad\text{for all }t\geq t_0.
\end{equation}
For $i\in[N]$, we define
\(   \label{def:FB}
    F_i(t;t_0) := \sum_{j\neq i} \Psi_{\cvel}(\tau_{ij}(t))  - \ln\left(\frac{\cvel+v_i(t)}{\cvel + v_i(t_0)}\right).
\)
Then $F_i$ can be seen as a generalization of the functional $J_{ij}$, given by \eqref{def:J1D}, for the setting with multiple agents.

\begin{lemma} 
\label{lem:eventual-leftmost-delay-bound}
Assume that there exists $t_0\geq 0$ and $\ell\in[N]$ such that \eqref{eq:eventually-leftmost} holds.
Moreover, assume the fat-tail condition \eqref{cond:flocking}.
Then 
\begin{equation*}  
    \max_{i\in[N]} \max_{j\in[N]}\tau_{ij}(t)    \leq     \frac{\cvel+\svel}{\cvel-\svel} \overline\tau \qquad\mbox{for all } t\geq t_0,
\end{equation*}
where
\(  \label{def:otau1}
   \overline\tau := \Psi_{\cvel}^{-1} \left( \sum_{j\neq \ell}  \Psi_{\cvel}(\tau_{\ell j}(t_0)) + \ln\left(\frac{\cvel+\svel}{\cvel -\svel}\right)  \right).
\)

\end{lemma}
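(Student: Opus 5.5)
The plan is to show that the generalized functional $F_\ell(\cdot\,;t_0)$ from \eqref{def:FB}, built around the eventually leftmost agent $\ell$, is nonincreasing on $[t_0,\infty)$. The argument mirrors Lemma~\ref{lem:doppler}, and the key observation is that all incoming signal orientations at $\ell$ are the same.

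\emph{Step 1: orientation.} I first show that for $t\geq t_0$ and $j\neq\ell$ with $\tau_{\ell j}(t)>0$ we have $x_j(r_{\ell j}(t))>x_\ell(t)$, i.e.\ $\varepsilon_{\ell j}=-1$. Suppose instead $x_j(r_{\ell j})<x_\ell(t)$. Since $x_\ell(t)\leq x_j(t)$ by \eqref{eq:eventually-leftmost}, we get
\[
\cvel\tau_{\ell j}=x_\ell(t)-x_j(r_{\ell j})\leq x_j(t)-x_j(r_{\ell j})\leq\svel\tau_{\ell j}<\cvel\tau_{\ell j},
\]
which is a contradiction.

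\emph{Step 2: dissipation.} Write $q:=\cvel+v_\ell(t)$ and $p_j:=\cvel+v_j(r_{\ell j}(t))$, both of which are at least $\cvel-\svel>0$. Differentiating the light-cone relation \eqref{def:tauij} with $\varepsilon_{\ell j}=-1$, exactly as in Lemma~\ref{lem:doppler}, gives $p_j\dot\tau_{\ell j}=p_j-q$, and hence
\[
\tfrac{\d}{\d t}\Psi_{\cvel}(\tau_{\ell j})=\psi_{\ell j}\,\frac{p_j-q}{p_j}.
\]
From \eqref{eq:CS2},
\[
\tfrac{\d}{\d t}\ln q=\sum_{j\neq\ell}\psi_{\ell j}\,\frac{p_j-q}{q}.
\]
Subtracting, we obtain
\[
\frac{\d}{\d t}F_\ell(t;t_0)=-\sum_{j\ne\ell}\psi_{\ell j}\frac{(p_j-q)^2}{p_jq}\leq 0 .
\]
The point is that the leftmost receiver sees every emitter from the same side, so the single logarithm $\ln(\cvel+v_\ell)$ compensates all the potentials simultaneously.

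\emph{Step 3: the main technical obstacle.} The difficulty is justifying this identity at times where $\tau_{\ell j}=0$, i.e.\ where $j$ collides with $\ell$. I would argue as follows. By Corollary~\ref{cor:retarded-increments}, $r_{\ell j}$ and hence $\tau_{\ell j}$ are Lipschitz, so $F_\ell$ is absolutely continuous, and it suffices to verify the sign of the derivative almost everywhere. On the open set $\{\tau_{\ell j}>0\}$ the computation above applies. On the closed set $\{\tau_{\ell j}=0\}=\{x_\ell=x_j\}$, at almost every point (density points where both $\tau_{\ell j}$ and $x_\ell-x_j$ are differentiable) we have $\dot\tau_{\ell j}=0$, $v_\ell=v_j$ and $r_{\ell j}=t$. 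Hence $p_j=q$, and both the identity $p_j\dot\tau_{\ell j}=p_j-q$ and the corresponding zero term remain valid.

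\emph{Step 4: conclusion.} Integrating from $t_0$ gives $F_\ell(t;t_0)\leq F_\ell(t_0;t_0)=\sum_{j\ne\ell}\Psi_{\cvel}(\tau_{\ell j}(t_0))$. Since $\Psi_{\cvel}\geq0$ and $\frac{\cvel+v_\ell(t)}{\cvel+v_\ell(t_0)}\leq\frac{\cvel+\svel}{\cvel-\svel}$, each single term satisfies $\Psi_{\cvel}(\tau_{\ell j}(t))$ at most the argument of $\Psi_{\cvel}^{-1}$ in \eqref{def:otau1}. Under \eqref{cond:flocking} the function $\Psi_{\cvel}$ is an increasing bijection of $[0,\infty)$, so $\tau_{\ell j}(t)\leq\overline\tau$.

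For a general pair $i,j$, I use that both $x_i(t)$ and $x_j(t)$ lie to the right of $x_\ell(t)$. Together with \eqref{eq:delay-distance} this gives
\[
|x_i-x_j|\leq\max(x_i-x_\ell,\,x_j-x_\ell)\leq(\cvel+\svel)\,\overline\tau,
\]
and hence $\tau_{ij}\leq|x_i-x_j|/(\cvel-\svel)\leq\frac{\cvel+\svel}{\cvel-\svel}\overline\tau$. This bound also covers $i=\ell$ because $\frac{\cvel+\svel}{\cvel-\svel}\geq1$.
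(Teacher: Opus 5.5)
Your proposal is correct and follows essentially the same route as the paper: show that every signal reaching the leftmost agent $\ell$ comes from the right, prove $F_\ell(\cdot\,;t_0)$ is nonincreasing, drop the nonnegative terms to get $\tau_{\ell j}\leq\overline\tau$, and pass to general pairs via \eqref{eq:delay-distance} and $\dx(t)=\max_j|x_\ell(t)-x_j(t)|$. Your Step~3 is valid but not needed: once Step~1 gives the signed relation $\cvel\tau_{\ell j}=x_j(r_{\ell j})-x_\ell$ for all $t\geq t_0$ (trivially also when $\tau_{\ell j}=0$), the implicit function theorem with $\partial_\tau\bigl(\cvel\tau-x_j(t-\tau)\bigr)=\cvel+v_j(t-\tau)\geq\cvel-\svel>0$ makes $\tau_{\ell j}$ continuously differentiable everywhere, and this is what the paper implicitly uses when it differentiates at every $t>t_0$.
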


\begin{proof}
We first note that if agent $\ell\in[N]$ is leftmost at time $t$, then every signal
the agent is receiving at this time comes from the right.
Indeed, since $\ell$ is leftmost, we have $x_j(t)\geq x_\ell(t)$ for every $j\in[N]$.
If, for some $j\ne \ell$, we had
$x_j(r_{\ell j}(t))<x_\ell(t)$, then \eqref{def:tauij} would give
\[
    x_\ell(t)-x_j(r_{\ell j}(t))    =\cvel\tau_{\ell j}(t).
\]
Consequently,
\[
    x_j(t)-x_j(r_{\ell j}(t)) =    \bigl(x_j(t)-x_\ell(t)\bigr)    +    \bigl(x_\ell(t)-x_j(r_{\ell j}(t))\bigr)  \geq \cvel\tau_{\ell j}(t),
\]
contradicting
\[
    |x_j(t)-x_j(r_{\ell j}(t))|    \leq\svel\tau_{\ell j}(t)
\]
with $\svel<\cvel$. Hence $x_j(r_{\ell j}(t))\geq x_\ell(t)$ for every $j\ne \ell$ and $t\geq t_0$.
Then the light-cone relation \eqref{def:tauij} reads
\[
   \cvel \tau_{\ell j}(t) = x_j(r_{\ell j}(t)) - x_\ell(t),
\]
and differentiation at time $t>t_0$ gives
\[
    (\cvel+v_j(r_{\ell j}(t))) \dot\tau_{\ell j}(t) =  v_j(r_{\ell j}(t))-v_\ell(t).
\]
It follows that
\[
    \frac{\d}{\d t}    \Psi_{\cvel}(\tau_{\ell j}(t))
    = \psi (\cvel \tau_{\ell j}(t)) \dot \tau_{\ell j}(t)
    =    \psi_{\ell j}(t)    \frac{v_j(r_{\ell j}(t))-v_\ell(t)}{\cvel+v_j(r_{\ell j}(t))}.
\]
Using \eqref{eq:CS2} and \eqref{def:FB}, we obtain for $t>t_0$,
\begin{align*}
    \frac{\d}{\d t}F_\ell(t;t_0)    
    &=  \sum_{j\ne \ell}    \psi_{\ell j}(t)\frac{v_j(r_{\ell j}(t))-v_\ell(t)}{\cvel+v_j(r_{\ell j}(t))} - \frac{\dot v_\ell(t)}{\cvel+v_\ell(t)}  \\
    &=\sum_{j\ne \ell}    \psi_{\ell j}(t)    \bigl(v_j(r_{\ell j}(t))-v_\ell(t)\bigr)
    \left(       \frac1{\cvel+v_j(r_{\ell j}(t))}- \frac{1}{\cvel+v_\ell(t)}     \right)
    \\
    &=
    -\sum_{j\ne \ell}    \psi_{\ell j}(t)    \frac{|v_j(r_{\ell j}(t))-v_\ell(t)|^2}
    {(\cvel+v_\ell(t))(\cvel+v_j(r_{\ell j}(t)))}
    \\ & \leq 0.
\end{align*}
Consequently,  $F_\ell(t;t_0)\leq F_\ell(t_0;t_0)$ for all $t\geq t_0$,
and recalling the uniform bound $|v_\ell|\leq\svel$, we have
\begin{align*}
    \Psi_{\cvel}\bigl(\tau_{\ell j}(t)\bigr)   
     \leq  F_\ell(t;t_0) + \ln\left(\frac{\cvel+\svel}{\cvel-\svel}\right)
    \leq  F_\ell(t_0;t_0) + \ln\left(\frac{\cvel+\svel}{\cvel-\svel}\right)
    = \sum_{j\neq \ell} \Psi_{\cvel}(\tau_{\ell j}(t_0)) + \ln\left(\frac{\cvel+\svel}{\cvel -\svel} \right) .
\end{align*}
The fat-tail condition \eqref{cond:flocking} then implies
\begin{equation*} 
    \tau_{\ell j}(t)\leq  \overline\tau    \qquad  \mbox{for all } j\neq\ell,\quad t\geq t_0,
\end{equation*}
with $\overline\tau$ given by \eqref{def:otau1}.
Since $\ell$ is a leftmost agent,
\[
    \dx(t)  =    \max_{j\in[N]}|x_\ell(t)-x_j(t)|.
\]
The lower estimate in \eqref{eq:delay-distance} gives $|x_\ell(t)-x_j(t)|    \leq    (\cvel+\svel)\tau_{\ell j}(t)$,
so that
\[
    \dx(t)\leq(\cvel+\svel) \overline\tau    \qquad \mbox{for all } t\geq t_0.
\]
Finally, for arbitrary $i\neq j$, the upper estimate in
\eqref{eq:delay-distance} yields
\[
    \tau_{ij}(t)    \leq    \frac{|x_i(t)-x_j(t)|}{\cvel-\svel}
    \leq    \frac{\dx(t)}{\cvel-\svel}
    \leq    \frac{\cvel+\svel}{\cvel-\svel} \overline\tau 
   \qquad \mbox{for all } t\geq t_0.
\]
\end{proof}

We hypothesise that any solution of \eqref{eq:CS1}--\eqref{cond:sublum} only exhibits a finite number of switches,
i.e., that the situation treated in Lemma \ref{lem:eventual-leftmost-delay-bound} above is generic.
However, we are not able to prove this hypothesis.
Therefore, we now consider the case when there are infinitely many genuine switching times $(t_n)_{n\in\N}$ in $(t_0,\infty)$,
accumulating only at infinity.
Order the switching times chronologically and denote $\ell_n$ the index of a leftmost agent on the time interval $(t_n,t_{n+1})$, i.e.,
\[   
    x_{\ell_n}(t)=\min_{i\in[N]}x_i(t),    \qquad t_n<t<t_{n+1}.
\]

\begin{lemma}\label{lem:leftmost-concatenation}
For any $m\in\N$ we have
\[
   \sum_{j\neq \ell_m} \Psi_{\cvel}(\tau_{\ell_mj}(t_m))\leq  \sum_{j\neq \ell_0}  \Psi_{\cvel}(\tau_{\ell_0j}(t_0)) + \sum_{n=0}^{m-1} \ln\left( \frac{\cvel + v_{\ell_n}(t_{n+1})}{\cvel + v_{\ell_n}(t_n)} \right) .
\]
\end{lemma}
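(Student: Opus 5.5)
The plan is to run the dissipation argument of Lemma~\ref{lem:eventual-leftmost-delay-bound} on each interval $(t_n,t_{n+1})$ between consecutive switches. The resulting one-step inequalities are then chained together, using the fact that at a switching time the old and the new leftmost agents occupy the same position. The claim then follows by induction on $m$; the case $m=0$ is trivial.

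\emph{Step 1 (one interval).} Fix $n$ and set $\ell:=\ell_n$. On $(t_n,t_{n+1})$ agent $\ell$ is leftmost. By continuity it is also leftmost on the closed interval $[t_n,t_{n+1}]$, in the sense that $x_\ell(t)=\min_i x_i(t)$ there. The first part of the proof of Lemma~\ref{lem:eventual-leftmost-delay-bound} then applies verbatim: every signal received by $\ell$ comes from the right, so the light-cone relation takes the unsigned linear form $\cvel\tau_{\ell j}(t)=x_j(r_{\ell j}(t))-x_\ell(t)$ for $t\in[t_n,t_{n+1}]$. The same computation gives
\[
\frac{\d}{\d t}F_\ell(t;t_n)=-\sum_{j\neq\ell}\psi_{\ell j}(t)\frac{|v_j(r_{\ell j}(t))-v_\ell(t)|^2}{(\cvel+v_\ell(t))(\cvel+v_j(r_{\ell j}(t)))}\leq0 .
\]
Since there is no absolute value in this form of the light-cone relation, the computation remains valid even where some $\tau_{\ell j}$ vanishes. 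To be safe, I would note that $\tau_{\ell j}$ and $r_{\ell j}$ are Lipschitz by Corollary~\ref{cor:retarded-increments}, so $F_\ell(\cdot\,;t_n)$ is Lipschitz. The identity then holds a.e., and integration over $[t_n,t_{n+1}]$ is justified. This yields $F_\ell(t_{n+1};t_n)\leq F_\ell(t_n;t_n)$, that is,
\[
\sum_{j\neq\ell_n}\Psi_\cvel(\tau_{\ell_nj}(t_{n+1}))\leq\sum_{j\neq\ell_n}\Psi_\cvel(\tau_{\ell_nj}(t_n))+\ln\frac{\cvel+v_{\ell_n}(t_{n+1})}{\cvel+v_{\ell_n}(t_n)}.
\]

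\emph{Step 2 (transfer at the switch).} At $t_{n+1}$ we have $x_{\ell_n}(t_{n+1})=x_{\ell_{n+1}}(t_{n+1})$. The light-cone relation \eqref{def:tauij} at time $t$ depends on the receiver only through its current position $x_i(t)$. Its solution is unique, since subluminality makes $\tau\mapsto\cvel\tau-|x-x_j(t-\tau)|$ strictly increasing. Hence $\tau_{\ell_{n+1}j}(t_{n+1})=\tau_{\ell_nj}(t_{n+1})$ for every $j\notin\{\ell_n,\ell_{n+1}\}$, and $\tau_{\ell_n\ell_{n+1}}(t_{n+1})=\tau_{\ell_{n+1}\ell_n}(t_{n+1})=0$. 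Since $\Psi_\cvel(0)=0$, the two sums $\sum_{j\neq\ell_{n+1}}\Psi_\cvel(\tau_{\ell_{n+1}j}(t_{n+1}))$ and $\sum_{j\neq\ell_n}\Psi_\cvel(\tau_{\ell_nj}(t_{n+1}))$ coincide: the missing terms in each are exactly the vanishing mutual delay, and all other terms match. This is the same observation already used in the proof of Lemma~\ref{lem:acc}.

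\emph{Step 3 (telescoping).} Combining Steps 1 and 2 gives the one-step bound
\[
\sum_{j\neq\ell_{n+1}}\Psi_\cvel(\tau_{\ell_{n+1}j}(t_{n+1}))\leq\sum_{j\neq\ell_n}\Psi_\cvel(\tau_{\ell_nj}(t_n))+\ln\frac{\cvel+v_{\ell_n}(t_{n+1})}{\cvel+v_{\ell_n}(t_n)}.
\]
Induction on $m$ then gives the claim. The main obstacle I expect is the regularity in Step 1: one must make the derivative computation rigorous up to the endpoints, and at instants where some $\tau_{\ell j}=0$ (agent $\ell$ coinciding with a retarded copy of $j$). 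I would handle this as indicated, via the signed linear light-cone identity together with the Lipschitz continuity of $\tau_{\ell j}$, and by working on the closed interval through continuity of $F_\ell$.
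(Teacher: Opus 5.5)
Your proposal is correct and follows essentially the same route as the paper. Both arguments use the dissipation of $F_{\ell_n}(\cdot\,;t_n)$ on each interval $(t_n,t_{n+1})$, exactly as in Lemma~\ref{lem:eventual-leftmost-delay-bound}. Both then identify the two delay sums at the switching time using $x_{\ell_n}(t_{n+1})=x_{\ell_{n+1}}(t_{n+1})$, uniqueness of the light-cone root and $\Psi_{\cvel}(0)=0$, and finish by telescoping. Your regularity remarks make explicit what the paper leaves implicit. The Lipschitz/a.e.\ detour is not even needed: the signed relation $\cvel\tau_{\ell j}=x_j(r_{\ell j})-x_\ell$ has $\tau$-derivative $\cvel+v_j(r_{\ell j})\geq\cvel-\svel>0$, so the implicit function theorem already gives $\tau_{\ell j}\in C^1$ on the closed interval.
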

\begin{proof}
Choose $1\leq n \leq m$. Since $x_{\ell_{n}}(t_{n})=x_{\ell_{n-1}}(t_{n})$, the light-cone relation \eqref{def:tauij} gives
\[
   \tau_{\ell_{n}\ell_{n-1}}(t_{n})  =    \tau_{\ell_{n-1}\ell_{n}}(t_{n})  =0 \qquad\mbox{and}\qquad
    \tau_{\ell_{n}j}(t_{n})    =    \tau_{\ell_{n-1}j}(t_{n}) \mbox{ for all } j\notin\{\ell_{n},\ell_{n-1}\}.
\]
Repeating the same steps as in the proof of Lemma \ref{lem:eventual-leftmost-delay-bound}
for $F_{\ell_n}(t;t_n)$ restricted to $t\in (t_{n},t_{n+1})$ gives
$\frac{\d}{\d t}F_{\ell_n}(t;t_n) \leq 0$ for $t\in (t_{n},t_{n+1})$. Consequently,
\begin{align*}
  \sum_{j\neq \ell_n} \Psi_{\cvel}(\tau_{\ell_nj}(t_n))
  &= \sum_{j\neq \ell_{n-1}} \Psi_{\cvel}(\tau_{\ell_{n-1}j}(t_n))\\
  &= F_{\ell_{n-1}}(t_n;t_{n-1}) + \ln\left(\frac{\cvel + v_{\ell_{n-1}}(t_n)}{\cvel + v_{\ell_{n-1}}(t_{n-1})}\right) \\
  &\leq F_{\ell_{n-1}}(t_{n-1};t_{n-1}) + \ln\left(\frac{\cvel + v_{\ell_{n-1}}(t_n)}{\cvel + v_{\ell_{n-1}}(t_{n-1})}\right) \\
  &= \sum_{j\neq \ell_{n-1}}  \Psi_{\cvel}(\tau_{\ell_{n-1}j}(t_{n-1})) + \ln\left(\frac{\cvel + v_{\ell_{n-1}}(t_n)}{\cvel + v_{\ell_{n-1}}(t_{n-1})}\right),
\end{align*}
and summing over $n=1,\ldots,m$ gives the result.
\end{proof}

Under the fat-tail condition \eqref{cond:flocking}, Lemma \ref{lem:leftmost-concatenation} yields a uniform bound
on the delays and spatial diameter provided that
\(   \label{eq:Cbound}
  \sup_{m\in\N} \,\sum_{n=0}^{m} \ln\left( \frac{\cvel + v_{\ell_n}(t_{n+1})}{\cvel + v_{\ell_n}(t_n)} \right) <\infty.
\)
However, we were not able to prove that this condition universally holds.

\section{Outlook: Multiple agents in multiple dimensions}
\label{sec:optical-action}
Finally, we record a multidimensional analogue of the one-dimensional
functional $F_i$ defined in \eqref{def:FB}.
The resulting functional satisfies a dissipation identity,
however, we were not able to establish its
boundedness from below. Consequently, the calculations carried out
in this section do not imply a uniform bound on the delays and the spatial diameter.
We consider them a motivation for future work.

Fix $i\in[N]$, and let $I_i$ be an interval on which
$\tau_{ij}(t)>0$ for every $j\neq i$. Define
\begin{equation*}  \label{def:optical-los}
    \omega_{ij}(t)
    :=
    \frac{x_j(r_{ij}(t))-x_i(t)}
    {\cvel\tau_{ij}(t)}
    \in\mathbb S^{d-1}.
\end{equation*}
Thus, $\omega_{ij}$ is the line-of-sight direction pointing from the
receiver $i$ towards the retarded position of the emitter $j$. We
further introduce
\begin{equation*}  \label{def:optical-pqg}
    p_{ij}
    :=
    \cvel+\omega_{ij}\cdot v_j(r_{ij}),
    \qquad
    q_{ij}
    :=
    \cvel+\omega_{ij}\cdot v_i.
\end{equation*}
The subluminal velocity bound gives
\begin{equation*}  \label{eq:optical-positive-denominators}
    \cvel-\svel
    \leq p_{ij}(t),q_{ij}(t)
    \leq \cvel+\svel.
\end{equation*}

Differentiating the light-cone relation \eqref{def:tauij}
and taking the scalar product with $\omega_{ij}$, we obtain
\begin{equation}\label{eq:optical-delay-kinematics}
    p_{ij}\dot\tau_{ij}=p_{ij}-q_{ij},
    \qquad
    \dot r_{ij}
    =
    1-\dot\tau_{ij}
    =
    \frac{q_{ij}}{p_{ij}}.
\end{equation}
The quotient $q_{ij}/p_{ij}$ is the Doppler ratio associated
with the corresponding retarded line of sight. Given a reference time $t_0\in I_i$, define the functional
\(  \label{def:Gi}
   G_i(t;t_0):= \sum_{j\neq i} \left( \Psi_{\cvel}\bigl(\tau_{ij}(t)\bigr) - \int_{t_0}^t    \psi_{ij}(s)    \frac{p_{ij}(s)-q_{ij}(s)}{q_{ij}(s)}    \,\d s\right),
\)
where we recall that $\psi_{ij}(s) = \psi(\cvel\tau_{ij}(s))$.

\begin{lemma}
\label{lem:optical-action-dissipation}
Fix $i\in[N]$, and let $I_i$ be an interval on which $\tau_{ij}(t)>0$ for every $j\neq i$.
Then the functional $G_i$ defined in \eqref{def:Gi} satisfies, for all $t\in I_i$,
\begin{equation}    \label{eq:optical-action-dissipation}
    \frac{\d}{\d t}G_i(t;t_0) = - \sum_{j\neq i}    \psi_{ij}(t)
       \frac{ \bigl|    \omega_{ij}(t)\cdot    \bigl(v_j(r_{ij}(t))-v_i(t)\bigr) \bigr|^2}{\bigl(\cvel+\omega_{ij}(t)\cdot v_j(r_{ij}(t))\bigr)\bigl(\cvel+\omega_{ij}(t)\cdot v_i(t)\bigr)}
    \leq0.
\end{equation}
\end{lemma}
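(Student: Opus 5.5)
The plan is to differentiate $G_i$ term by term, mirroring the computation in Lemma~\ref{lem:doppler}. The integral term contributes only its integrand, with no logarithm needed.

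First I justify the kinematic identity \eqref{eq:optical-delay-kinematics}. On $I_i$ we have $\tau_{ij}>0$, so the map $y\mapsto|y|$ is smooth at $y=x_i(t)-x_j(r_{ij}(t))\neq0$. Hence $\tau_{ij}$ is $C^1$ there; I would argue this by the implicit function theorem applied to \eqref{def:tauij}, using that the derivative of $\tau\mapsto\cvel\tau-|x_i(t)-x_j(t-\tau)|$ is at least $\cvel-\svel>0$, and using \eqref{IC:comp} if $r_{ij}\le0$. Differentiating $\cvel^2\tau_{ij}^2=|x_j(r_{ij})-x_i|^2$ gives
\[
\cvel^2\tau_{ij}\dot\tau_{ij}=(x_j(r_{ij})-x_i)\cdot\bigl(v_j(r_{ij})(1-\dot\tau_{ij})-v_i\bigr).
\]
Dividing by $\cvel\tau_{ij}$ yields $\cvel\dot\tau_{ij}=\omega_{ij}\cdot v_j(r_{ij})(1-\dot\tau_{ij})-\omega_{ij}\cdot v_i$. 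Rearranging gives $p_{ij}\dot\tau_{ij}=p_{ij}-q_{ij}$, and $p_{ij}\ge\cvel-\svel>0$ allows the division.

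Next, by the chain rule and \eqref{def:Psic}, $\frac{\d}{\d t}\Psi_\cvel(\tau_{ij})=\psi_{ij}\dot\tau_{ij}=\psi_{ij}\frac{p_{ij}-q_{ij}}{p_{ij}}$. The integrand in \eqref{def:Gi} is continuous on $I_i$, so the fundamental theorem of calculus gives
\[
\frac{\d}{\d t}G_i=\sum_{j\ne i}\psi_{ij}(p_{ij}-q_{ij})\Bigl(\frac1{p_{ij}}-\frac1{q_{ij}}\Bigr)=-\sum_{j\neq i}\psi_{ij}\frac{(p_{ij}-q_{ij})^2}{p_{ij}q_{ij}}.
\]
Since $p_{ij}-q_{ij}=\omega_{ij}\cdot(v_j(r_{ij})-v_i)$, this is exactly \eqref{eq:optical-action-dissipation}. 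Nonpositivity follows from $\psi_{ij}>0$ and $p_{ij},q_{ij}\ge\cvel-\svel>0$.

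The only delicate point is the regularity of $\tau_{ij}$ and $\omega_{ij}$ on $I_i$, that is, the differentiability of the light-cone relation away from $\tau_{ij}=0$. The implicit-function argument above handles this, together with the continuity of $v_j$ across $t=0$ on the history interval.
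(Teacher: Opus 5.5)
Your proposal is correct and follows the paper's proof. The paper likewise substitutes the kinematic identity $p_{ij}\dot\tau_{ij}=p_{ij}-q_{ij}$ into $\frac{\d}{\d t}\Psi_\cvel(\tau_{ij})=\psi_{ij}\dot\tau_{ij}$, differentiates the integral term directly, and combines the two into $-\psi_{ij}(p_{ij}-q_{ij})^2/(p_{ij}q_{ij})$. Your derivation of \eqref{eq:optical-delay-kinematics} and your implicit-function argument for the regularity of $\tau_{ij}$ supply details that the paper only sketches before stating the lemma.
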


\begin{proof}
Using \eqref{eq:optical-delay-kinematics} in $\frac{\d}{\d t} \Psi_{\cvel}(\tau_{ij}) = \psi (\cvel \tau_{ij}) \dot \tau_{ij}$, we have
\[
    \frac{\d}{\d t}G_i(t;t_0)   =     \sum_{j\neq i}  \psi_{ij}(p_{ij}-q_{ij})    \left(  \frac1{p_{ij}}-\frac1{q_{ij}} \right)
   =  -  \sum_{j\neq i}    \psi_{ij}    \frac{(p_{ij}-q_{ij})^2}{p_{ij}q_{ij}},
\]
which is \eqref{eq:optical-action-dissipation}.
\end{proof}

The problem is that, in several dimensions, no uniform upper bound
on 
\( \label{eq:exA}
   \sum_{j\neq i} \int_{t_0}^t    \psi_{ij}(s)    \frac{p_{ij}(s)-q_{ij}(s)}{q_{ij}(s)}    \,\d s
 \)
is presently available. Therefore, \eqref{eq:optical-action-dissipation}
gives neither a uniform bound on $\Psi_{\cvel}\bigl(\tau_{ij}(t)\bigr)$, and hence on the delays,
nor an a priori bound on the total accumulated dissipation.

We only note that for $d=1$, the functional $G_i$ reduces to $F_i$
introduced in \eqref{def:FB} to study the one-dimensional setting.
Indeed, let agent $i$ be leftmost throughout the
interval under consideration. As shown above, every signal received by
$i$ then comes from the right, so that
\[
    \omega_{ij}=1,    \qquad    q_{ij}=\cvel+v_i.
\]
Consequently, \eqref{eq:CS2} gives
\[
      \dot v_i =  \sum_{j\neq i}  \psi_{ij}(p_{ij}-q_{ij}) .
\]
It follows that
\[
    \sum_{j\neq i} \int_{t_0}^t    \psi_{ij}(s)    \frac{p_{ij}(s)-q_{ij}(s)}{q_{ij}(s)}  \,\d s = 
     \int_{t_0}^t    \frac{\dot v_i(s)}    {\cvel+v_i(s)}  \,\d s
      = \ln\left(   \frac{\cvel+v_i(t)}{\cvel+v_i(t_0)}    \right),
\]
and, therefore, $G_i(t;t_0)    =  F_i(t;t_0).$
Obviously, the problem of proving the bound \eqref{eq:Cbound} is related to the problem of bounding \eqref{eq:exA} from above.

\bibliographystyle{amsplain}

\end{document}